\numberwithin{equation}{section}
\newtheorem{thm}{Theorem}[section]
\newtheorem{prop}[thm]{Proposition}
\newtheorem{cor}[thm]{Corollary}
\newtheorem{lem}[thm]{Lemma}
\theoremstyle{remark}
\newtheorem{rmk}[thm]{Remark}
\theoremstyle{definition}
\newtheorem{defn}{Definition}[section]
\newtheorem{assp}{Assumption}[section]
\DeclareMathOperator{\E}{\mathbb{E}}
\DeclareMathOperator{\BF}{\mathcal{BF}}
\DeclareMathOperator{\SBF}{\mathcal{SBF}}
\DeclareMathOperator{\N}{\mathbb{N}}
\DeclareMathOperator{\R}{\mathbb{R}}
\DeclareMathOperator{\cI}{\mathcal{I}}
\DeclareMathOperator{\cL}{\mathcal{L}}
\DeclareMathOperator{\cB}{\mathcal{B}}
\DeclareMathOperator{\fe}{\mathfrak{e}}
\DeclareMathOperator{\bP}{\mathbb{P}}
\newcommand{\der}[2]{\frac{d #1}{d #2}}
\newcommand{\dersup}[3]{\frac{d^{#3} #1}{d #2^{#3}}}
\newcommand{\Norm}[2]{\left\Vert #1 \right\Vert_{#2}}
\title{Abstract Cauchy problems for the generalized fractional calculus}
\author{Giacomo Ascione}
\address{Dipartimento di Matematica e Applicazioni "Renato Caccioppoli", Università degli Studi di Napoli Federico II}
\email{giacomo.ascione@unina.it}
\begin{document}
	\maketitle
	\begin{abstract}
	We focus on eventually non-linear abstract Cauchy problems with a generalized fractional derivative in time. First we prove a local existence and uniqueness result, then we focus on a generalized Gr\"onwall inequality. Before addressing the inequality, we study some properties of eigenvalues and eigenfunctions of the generalized fractional derivatives. Finally, we prove some consequences of the generalized Gr\"onwall inequality.
	\end{abstract}
\keywords{Keywords: Bernstein functions, Gr\"onwall inequality, contraction theorem, inverse subordinator}
\tableofcontents
	\section{Introduction}
	Fractional calculus has been introduced in the context of the symbolic method for classical calculus by Leibniz himself (see \cite{debnath2004brief}); however we have to wait for contemporary times to see the explosion of studies on fractional derivatives, covering a wide range of applications (see \cite{debnath2003recent}). Together with the classical well-known contributions of Riemann and Liouville, a widely used class of fractional derivatives is given by the Caputo derivatives introduced in \cite{caputo1967linear}, which is simpler to use in the context of applied sciences due to the possibility of defining Cauchy problems with them (see \cite{diethelm2010analysis}).\\
	Different kinds of fractional derivatives were also introduced, starting from the Riemann-Liouville or the Caputo derivative and changing the memory kernel. This is the case, for instance, of the tempered fractional calculus (see \cite{sabzikar2015tempered}). Such kind of operators naturally arise in the theory of generalized fractional calculus, introduced in \cite{kochubei2011general}. In particular, in such work, Caputo-type fractional derivatives were defined by means of complete Bernstein functions, while in \cite{toaldo2015convolution} they were extended to any Bernstein function.\\
	A strict link between Bernstein functions and stochastic process is known: Bernstein functions are Laplace exponents of subordinators, i.e. non-decreasing L\'evy processes (see \cite{bertoin1996levy,schilling2012bernstein}). This link played a crucial role in the classical fractional calculus (see \cite{meerschaert2011stochastic}). Moreover, the linear theory for generalized fractional differential problems has been widely studied in terms of time-changed semigroups by using such stochastic representation (see \cite{toaldo2015convolution,chen2017time}). On the other hand, a theory for non-linear generalized fractional Cauchy problems  is developed (up to our knowledge) only in specific cases, as for instance, in the classical fractional calculus (see \cite{kilbas2006theory,yong2016basic}) and for the tempered fractional calculus (see \cite{li2019well}). \\
	In this paper we consider (eventually non-linear) abstract Cauchy problems with a generalized fractional derivative in time. In particular we focus on the class of special Bernstein functions, of which inverse operators to Caputo-type generalized fractional derivatives are known (see \cite{meerschaert2019relaxation}). Indeed, after Section \ref{Sec2}, in which we introduce the preliminary relation, in Section \ref{Sec3} we prove a local existence and uniqueness result for the aforementioned Cauchy problems under suitable \textit{light} hypotheses on the non-linearity. The fact we are considering a special Bernstein function (with some technical but not so restrictive assumptions) and a quite general non-linearity gives us a unified result that covers different cases, as for instance the tempered one. Moreover, we prove, for example purpose, that we cover the linear case whenever the considered linear operator is continuous. In the linear case we are also able to prove by hand a global uniqueness result (conditioned to global existence of the solutions).\\
	In the general theory of ordinary differential equations a main role is played by the Gr\"onwall inequality (see \cite{ames1997inequalities}). As for existence and uniqueness results, also the Gr\"onwall inequality has been extended to classical and generalized fractional calculus only on specific cases (but with similar techniques): for instance we have generalizations of the Gr\"onwall inequality to the Caputo derivative (see \cite{ye2007generalized}), Caputo-Katugampola derivative (see \cite{almeida2017gronwall}), Caputo derivatives with respect to other functions (see \cite{almeida2018extension}), and also Hadamard derivatives and general $\Psi$-Hilfer operators (see the references in \cite{almeida2018extension}). Here we aim to give a unified proof of a Gr\"onwall inequality for the generalized fractional calculus (as $\Phi$ is a special Bernstein function). To do this, we first need to study eigenvalues and eigenfunctions of the generalized fractional derivatives. This has been done, for negative eigenvalues, in \cite{kochubei2011general} for complete Bernstein functions and in \cite{meerschaert2019relaxation} for special Bernstein functions, while for positive eigenvalues we refer to \cite{kochubei2019growth} in the case of complete Bernstein functions.\\
	In Section \ref{Sec4}, by using the link between Bernstein functions and subordinators, we give a representation of the eigenfunctions of generalized fractional derivative for any (non necessarily special) Bernstein function (under suitable but not restrictive hypotheses) for any eigenvalue. Then, in the case of special Bernstein functions, we express a series representation of such functions. Such functions are then used in Section \ref{Sec5} to prove a Gr\"onwall inequality for the generalized fractional calculus. Finally, in Section \ref{Sec6}, we focus on consequences of this inequality, such as continuous dependence on initial data or parameters and global uniqueness results (under a global existence hypothesis) also in the non-linear case.
	\section{Preliminaries and notation}\label{Sec2}
	In this section we will give some preliminary definitions and fix the notations for what follows.
	\begin{defn}
		We say a function $\Phi \in C^\infty(0,+\infty)$ is a Bernstein function (see \cite{schilling2012bernstein}) if and only if, $\Phi(\lambda) \ge 0$ and, for any $n \in \N=\{1,2,\dots\}$, it holds
		\begin{equation*}
		(-1)^n\dersup{\Phi}{\lambda}{n} \le 0.
		\end{equation*}
		The convex cone (see \cite[Corollary $3.8$]{schilling2012bernstein}) of Bernstein functions will be denoted as $\BF$.
	\end{defn}
	For any Bernstein function, the following theorem, known as L\'evy-Kintchine representation theorem, holds (see \cite[Theorem $3.2$]{schilling2012bernstein}).
	\begin{thm}
		A function $\Phi: (0,+\infty)\to \R$ belongs to $\BF$ if and only if there exist two constants $a_\Phi,b_\Phi \ge 0$ and a measure $\nu_\Phi$ on $(0,+\infty)$ such that
		\begin{equation}\label{intcont}
		\int_{0}^{+\infty}(1 \wedge x)\nu_\Phi(dx)<+\infty,
		\end{equation}
		and
		\begin{equation}\label{eq:LKrepr}
		\Phi(\lambda)=a_\Phi+b_\Phi\lambda+\int_0^{+\infty}(1-e^{-\lambda x})\nu_\Phi(dx).
		\end{equation}
		The measure $\nu$ is called the L\'evy measure of $\Phi$. Vice verse, any triple $(a_\Phi,b_\Phi,\nu_\Phi)$, where $a_\Phi,b_\Phi \ge 0$ and $\nu_\Phi$ is a measure on $(0,+\infty)$ satisfying condition \eqref{intcont}, defines a unique $\Phi \in \BF$ via equation \eqref{eq:LKrepr}.
	\end{thm}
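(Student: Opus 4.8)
The plan is to prove the two implications separately: the ``vice versa'' direction by an elementary differentiation argument, and the forward direction by reducing to Bernstein's theorem on completely monotone functions.

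For the easy direction, assume $\Phi$ is given by \eqref{eq:LKrepr} with $a_\Phi,b_\Phi\ge 0$ and $\nu_\Phi$ satisfying \eqref{intcont}. First I would note that the integral in \eqref{eq:LKrepr} is finite for every $\lambda>0$, since $0\le 1-e^{-\lambda x}\le(1\vee\lambda)(1\wedge x)$, so that $\Phi$ is well defined and nonnegative. Then, using $\frac{d^n}{d\lambda^n}(1-e^{-\lambda x})=(-1)^{n+1}x^ne^{-\lambda x}$ and the fact that for each fixed $n\ge 1$ the function $x^ne^{-\lambda x}$ is dominated on $(0,+\infty)$ by a constant (locally bounded in $\lambda$) times $1\wedge x$, dominated convergence lets me differentiate under the integral sign. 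This gives $\Phi'(\lambda)=b_\Phi+\int_0^{+\infty}xe^{-\lambda x}\,\nu_\Phi(dx)\ge 0$ and $(-1)^n\Phi^{(n)}(\lambda)=-\int_0^{+\infty}x^ne^{-\lambda x}\,\nu_\Phi(dx)\le 0$ for $n\ge 2$, so $\Phi\in\BF$.

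For the forward direction, let $\Phi\in\BF$. From $(-1)^n\Phi^{(n)}\le 0$ for all $n\ge 1$ I obtain $(-1)^m(\Phi')^{(m)}=(-1)^m\Phi^{(m+1)}\ge 0$ for every $m\ge 0$; that is, $\Phi'$ is nonnegative and completely monotone. By Bernstein's theorem there is a unique measure $\mu$ on $[0,+\infty)$ with $\Phi'(\lambda)=\int_{[0,+\infty)}e^{-\lambda x}\,\mu(dx)$ for all $\lambda>0$. I then set $b_\Phi:=\mu(\{0\})$ and let $\rho$ denote the restriction of $\mu$ to $(0,+\infty)$. Fixing $\lambda_0>0$, integrating $\Phi'$ over $[\lambda_0,\lambda]$ and applying Tonelli's theorem yields $\Phi(\lambda)-\Phi(\lambda_0)=b_\Phi(\lambda-\lambda_0)+\int_0^{+\infty}x^{-1}(e^{-\lambda_0x}-e^{-\lambda x})\,\rho(dx)$; hence, writing $\nu_\Phi(dx):=x^{-1}\rho(dx)$ on $(0,+\infty)$, I get $\Phi(\lambda)=\Phi(\lambda_0)+b_\Phi(\lambda-\lambda_0)+\int_0^{+\infty}(e^{-\lambda_0x}-e^{-\lambda x})\,\nu_\Phi(dx)$.

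To conclude I would send $\lambda_0\downarrow 0$: since $\Phi'\ge 0$ the function $\Phi$ is nondecreasing, so $a_\Phi:=\lim_{\lambda_0\downarrow0}\Phi(\lambda_0)=\inf_{\lambda>0}\Phi(\lambda)$ exists and is $\ge 0$ (as $\Phi\ge 0$), and since $e^{-\lambda_0x}-e^{-\lambda x}\uparrow 1-e^{-\lambda x}$ the monotone convergence theorem produces \eqref{eq:LKrepr}. Finiteness of the left-hand side then forces $\int_0^{+\infty}(1-e^{-\lambda x})\,\nu_\Phi(dx)<+\infty$, which is equivalent to \eqref{intcont} by the comparison $1-e^{-\lambda x}\asymp 1\wedge x$; and uniqueness of the triple is read off from $a_\Phi=\Phi(0^+)$, $b_\Phi=\lim_{\lambda\to+\infty}\Phi(\lambda)/\lambda$, and injectivity of the Laplace transform applied to $\Phi'$. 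The single genuinely hard ingredient here is Bernstein's theorem itself; in a self-contained treatment I would prove it by approximating the completely monotone $\Phi'$ with Bernstein-type polynomials (equivalently, via the Post--Widder inversion formula) and extracting a vaguely convergent subsequence of the associated discrete measures through Helly's selection principle, but as the statement is only recalled here I would simply cite \cite[Theorem~1.4]{schilling2012bernstein}.
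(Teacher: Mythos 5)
Your proof is correct: the reduction of the forward direction to Bernstein's theorem via the complete monotonicity of $\Phi'$, the Tonelli/monotone-convergence passage to the limit $\lambda_0\downarrow 0$, and the dominated-convergence argument for the converse are all sound, and the uniqueness claims ($a_\Phi=\Phi(0^+)$, $b_\Phi=\lim_{\lambda\to+\infty}\Phi(\lambda)/\lambda$, injectivity of the Laplace transform) check out. The paper itself states this classical result without proof, citing \cite[Theorem 3.2]{schilling2012bernstein}, and your argument is essentially the standard one given there, so there is no substantive divergence to report.
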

We will denote $\bar{\nu}_\Phi(t)=\nu_\Phi(t,+\infty)$.\\
In general, Bernstein functions can be seen as Laplace exponents of particular L\'evy processes.  Let us recall the following definition.
	\begin{defn}
		A subordinator $\sigma(t)$ (see \cite[Chapter $III$]{bertoin1996levy}) is a non-decreasing L\'evy process.
	\end{defn}
	Concerning the link between subordinators and Bernstein functions, we have the following Theorem (see \cite[Theorem $5.1$]{schilling2012bernstein}).
	\begin{thm}
		For any Bernstein function $\Phi$ there exists a unique subordinator $\sigma_\Phi(t)$ such that 
		\begin{equation}\label{Lapexp}
		\E[e^{-\lambda \sigma_\Phi(t)}]=e^{-t\Phi(\lambda)}.
		\end{equation}
		Viceversa, for any subordinator $\sigma(t)$ there exists a Bernstein function $\Phi_\sigma(t)$ such that Equation \eqref{Lapexp} holds.
	\end{thm}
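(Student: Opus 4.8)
The plan is to handle the two directions separately, in both cases passing through the associated \emph{convolution semigroup} of (sub-)probability measures on $[0,+\infty)$ and invoking the classical Hausdorff--Bernstein--Widder theorem alongside the L\'evy-Khintchine representation stated above.

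For the implication from subordinator to Bernstein function, I would let $\sigma$ be a subordinator and denote by $p_t$ the law of $\sigma(t)$, which is a probability measure carried by $[0,+\infty)$ (nonnegativity follows from monotonicity together with $\sigma(0)=0$). Stationarity and independence of the increments give $p_{t+s}=p_t*p_s$, so the Laplace transforms $\widehat{p_t}(\lambda):=\E[e^{-\lambda\sigma(t)}]$ satisfy $\widehat{p_{t+s}}(\lambda)=\widehat{p_t}(\lambda)\,\widehat{p_s}(\lambda)$ for every $\lambda\ge 0$; together with $\widehat{p_0}\equiv 1$ and right-continuity in $t$ (a consequence of stochastic continuity of L\'evy processes), this forces $\widehat{p_t}(\lambda)=e^{-t\Phi_\sigma(\lambda)}$ with $\Phi_\sigma(\lambda):=-\log\widehat{p_1}(\lambda)\in[0,+\infty)$. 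To see that $\Phi_\sigma\in\BF$, I would write, using $\widehat{p_{1/n}}(\lambda)=e^{-\Phi_\sigma(\lambda)/n}$,
\[
\Phi_\sigma(\lambda)=\lim_{n\to\infty} n\bigl(1-\widehat{p_{1/n}}(\lambda)\bigr)=\lim_{n\to\infty}\int_0^{+\infty}(1-e^{-\lambda x})\,\bigl(n\,p_{1/n}\bigr)(dx),
\]
noting that for each $n$ the function under the limit is a Bernstein function by the L\'evy-Khintchine theorem (with $a=b=0$ and L\'evy measure the restriction of $n\,p_{1/n}$ to $(0,+\infty)$, which satisfies \eqref{intcont} because $p_{1/n}$ is a probability measure), and concluding by the closure of $\BF$ under pointwise limits.

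For the converse, fix $\Phi\in\BF$ and set $\widehat{p_t}(\lambda):=e^{-t\Phi(\lambda)}$ for $t\ge 0$, $\lambda\ge 0$. The first step is to show that each such function is the Laplace transform of a unique sub-probability measure $p_t$ on $[0,+\infty)$: indeed $\lambda\mapsto e^{-t\Phi(\lambda)}$ is completely monotone, being the composition of the completely monotone map $x\mapsto e^{-x}$ with the Bernstein function $t\Phi$, so by Hausdorff--Bernstein--Widder it equals $\int_0^{+\infty}e^{-\lambda x}p_t(dx)$ for a measure $p_t$ whose total mass is $\lim_{\lambda\to 0^+}e^{-t\Phi(\lambda)}=e^{-ta_\Phi}\le 1$ (a genuine probability measure precisely when $a_\Phi=0$, i.e. when the subordinator is conservative; otherwise one works with the killed version). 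From $e^{-(t+s)\Phi}=e^{-t\Phi}\,e^{-s\Phi}$ and injectivity of the Laplace transform one gets $p_{t+s}=p_t*p_s$, with $p_0=\delta_0$ and $p_t\to\delta_0$ weakly as $t\to 0^+$ (since $\widehat{p_t}\to 1$ pointwise). Kolmogorov's extension theorem then produces a process with independent stationary increments distributed as $p_t$; a right-continuous modification of it is a L\'evy process $\sigma_\Phi$, which is non-decreasing because each $p_t$ lives on $[0,+\infty)$ and stochastically continuous by the weak convergence above, hence a subordinator satisfying $\E[e^{-\lambda\sigma_\Phi(t)}]=e^{-t\Phi(\lambda)}$ by construction. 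Uniqueness in law follows because any subordinator with Laplace exponent $\Phi$ must have one-dimensional increment laws equal to $p_t$ (injectivity of Laplace transforms), hence identical finite-dimensional distributions.

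I expect the main obstacle to be the composition step in the converse, namely that $e^{-t\Phi(\lambda)}$ is indeed the Laplace transform of a measure on $[0,+\infty)$ with the stated total mass (equivalently, that a completely monotone function stays completely monotone after composition with a Bernstein function), together with, dually, the closure of $\BF$ under pointwise limits used in the first direction; once these are granted, the construction of the process from its convolution semigroup via Kolmogorov's theorem, the passage to a right-continuous modification, and the uniqueness argument are all routine.
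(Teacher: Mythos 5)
The paper does not actually prove this statement: it is quoted as background with a citation to \cite{schilling2012bernstein} (Theorem 5.1), so there is no in-paper argument to compare against. Your proposal is essentially the standard proof from that reference --- convolution semigroup structure plus closure of $\BF$ under pointwise limits for the direction from subordinator to Bernstein function, and complete monotonicity of $e^{-t\Phi}$, Hausdorff--Bernstein--Widder, and Kolmogorov extension with a c\`adl\`ag modification for the converse --- and it is correct as sketched. The one point worth making explicit is the killing term you already flag: with the paper's definition of a subordinator as a non-decreasing L\'evy process (so with genuine probability marginals), the exact bijection is with Bernstein functions satisfying $a_\Phi=0$, and for $a_\Phi>0$ one must pass to killed subordinators; the paper's statement glosses over this as well.
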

	Since we focus on Bernstein functions, then we will usually denote subordinators by $\sigma_\Phi(t)$, referring to the fact that its Laplace exponent is given by $\Phi$. Moreover, for any subordinator, the following occupation measure can be defined.
	\begin{defn}
		Let $\sigma_\Phi(t)$ be a subordinator. The potential measure of $\sigma_\Phi(t)$ on $[0,+\infty)$ is defined as
		\begin{equation*}
		U_\Phi(A)=\E\left[\int_0^t \mathbf{1}_{A}(X_s)ds\right], A \in \cB([0,+\infty))
		\end{equation*}
		where $\cB([0,+\infty))$ is the Borel $\sigma$-algebra of $[0,+\infty)$ and $\mathbf{1}_A$ is the indicator function of the Borel set $A$. We will denote the distribution function of the potential measure $U_\Phi(t):=U_\Phi(0,t)$ and we will usually refer to it directly as potential measure.
	\end{defn}
	Actually, we can define a right-continuous inverse for the process $\sigma_\Phi(t)$.
	\begin{defn}
		Let $\sigma_\Phi$ be a subordinator. For any $y \ge 0$ we define
		\begin{equation*}
		L_\Phi(t)=\inf\{y>0: \ \sigma_\Phi(y)>t\},
		\end{equation*}
		called inverse subordinator.
	\end{defn}
	\begin{rmk}
		As shown in \cite{meerschaert2008triangular}, if $\nu_\Phi(0,+\infty)=+\infty$, then $L_\Phi(t)$ is an absolutely continuous random variable for any $t>0$. Let us denote by $f_\Phi(s;t)$ its probability density function.
	\end{rmk}
Once we have defined the inverse subordinator, by using the fact that $\sigma_\Phi$ and $L_\Phi$ are increasing, we have
\begin{equation*}
U_\Phi(t)=\E[L_\Phi(t)].
\end{equation*}
Let us denote by $\cL_{t \to z}$ the Laplace transform acting on the variable $t$, i.e. for any function $f \in L^1_{\rm loc}([0,+\infty),X)$, where $(X, |\cdot|)$ is a Banach space, we consider
\begin{equation*}
\cL_{t \to z}[f(t)](z)=\int_0^{+\infty}e^{-zt}f(t)dt
\end{equation*}
for any $z \in \mathbb{C}$, where the integral is interpreted as Bochner integral (see \cite{arendt2001vector}). It is also shown that there exists a real number $z_0:={\rm abs}(f)$ (eventually ${\rm abs}(f)=+\infty$), called the abscissa of convergence of $f$ such that $\cL_{t \to z}[f(t)](z)$ is convergent for any $z \in \mathbb{C}$ such that $\Re(z)>z_0$ and is divergent for any $z \in \mathbb{C}$ such that $\Re(z)<z_0$. We say that $f \in L^1_{\rm loc}([0,+\infty),X)$ is Laplace transformable if ${\rm abs}(f)<+\infty$.\\
In \cite{meerschaert2008triangular} the following relation has been shown:
\begin{equation}\label{Laptransdens}
\cL_{t \to z}[f_\Phi(s;t)](z)=\frac{\Phi(z)}{z}e^{-s\Phi(z)}, \ s>0, \ z>0.
\end{equation}
Moreover, it is easy to see that
\begin{equation}\label{laptranstail}
\cL_{t \to z}[\bar{\nu}_\Phi(t)](z)=\frac{\Phi(z)}{z}, \ z>0.
\end{equation}
In the following, we will use a particular class of Bernstein functions.
\begin{defn}
	Given a Bernstein function $\Phi \in \BF$ we call conjugate of $\Phi$ the function
	\begin{equation*}
	\Phi^\star(\lambda)=\frac{\lambda}{\Phi(\lambda)}.
	\end{equation*}
	We say $\Phi$ is a special Bernstein function if $\Phi^\star$ is a Bernstein function and the cone (see \cite[Proposition $11.20$]{schilling2012bernstein}) of special Bernstein function will be denoted by $\SBF$.
\end{defn}
Concerning special Bernstein functions, the associated potential measure is \textit{almost} absolutely continuous except at most for a jump in $0$, as stated in the following theorem (see \cite[Theorem $11.3$]{schilling2012bernstein}).
	\begin{thm}
		Let $\Phi \in \SBF$. Then there exists a non-negative and non-increasing function $u_\Phi(t)$ such that $\int_0^1 u_\Phi(t)dt<+\infty$ and
		\begin{equation*}
		U_\Phi(dt)=c_\Phi\delta_0(dt)+u_\Phi(t)dt
		\end{equation*}
		where $\delta_0(dt)$ is Dirac's $\delta$ measure centered in $0$ and
		\begin{equation*}
		c_\Phi=\begin{cases} 0 & b_\Phi>0 \\
		\frac{1}{a_\Phi+\nu_\Phi(0,+\infty)} & b_\Phi=0.
		\end{cases}
		\end{equation*}
		In particular, if $a_\Phi=b_\Phi=0$ and $\nu_\Phi(0,+\infty)=+\infty$, then $U_\Phi$ is absolutely continuous with density given by $u_\Phi$, called the potential density of $\Phi$.
	\end{thm}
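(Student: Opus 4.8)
The plan is to identify the measure $U_\Phi$ through its Laplace transform, exploiting the defining property of a special Bernstein function. The first step is the classical identity
\begin{equation*}
\int_{[0,+\infty)}e^{-zt}\,U_\Phi(dt)=\frac{1}{\Phi(z)},\qquad z>0.
\end{equation*}
This comes from the definition of the potential measure: reading $U_\Phi$ as the expected occupation measure of $\sigma_\Phi$ and applying Tonelli's theorem together with \eqref{Lapexp}, one gets $\int_{[0,+\infty)}e^{-zt}U_\Phi(dt)=\E\left[\int_0^{+\infty}e^{-z\sigma_\Phi(s)}\,ds\right]=\int_0^{+\infty}e^{-s\Phi(z)}\,ds=\Phi(z)^{-1}$. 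Since $\Phi(z)^{-1}<+\infty$ for every $z>0$, the measure $U_\Phi$ is locally finite and Laplace transformable, hence uniquely determined by the function $z\mapsto\Phi(z)^{-1}$ on $(0,+\infty)$.

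The second step uses the hypothesis $\Phi\in\SBF$: then $\Phi^\star(\lambda)=\lambda/\Phi(\lambda)$ is a Bernstein function, so by \eqref{eq:LKrepr} it can be written as $\Phi^\star(\lambda)=a_{\Phi^\star}+b_{\Phi^\star}\lambda+\int_0^{+\infty}(1-e^{-\lambda x})\,\nu_{\Phi^\star}(dx)$ with $\nu_{\Phi^\star}$ satisfying \eqref{intcont}. Dividing by $z$, using the elementary identity $z^{-1}(1-e^{-zx})=\int_0^x e^{-zt}\,dt$ and Tonelli's theorem, I would rewrite
\begin{equation*}
\frac{1}{\Phi(z)}=\frac{\Phi^\star(z)}{z}=b_{\Phi^\star}+\frac{a_{\Phi^\star}}{z}+\int_0^{+\infty}e^{-zt}\,\bar{\nu}_{\Phi^\star}(t)\,dt,\qquad z>0,
\end{equation*}
with $\bar{\nu}_{\Phi^\star}(t)=\nu_{\Phi^\star}(t,+\infty)$. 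Since $b_{\Phi^\star}=\int_{[0,+\infty)}e^{-zt}\,b_{\Phi^\star}\delta_0(dt)$ and $a_{\Phi^\star}/z=\int_0^{+\infty}e^{-zt}\,a_{\Phi^\star}\,dt$, each summand on the right is the Laplace transform of an explicit non-negative measure.

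By the uniqueness recorded in the first step, the previous display forces
\begin{equation*}
U_\Phi(dt)=b_{\Phi^\star}\,\delta_0(dt)+\big(a_{\Phi^\star}+\bar{\nu}_{\Phi^\star}(t)\big)\,dt,
\end{equation*}
and it then suffices to set $c_\Phi:=b_{\Phi^\star}$ and $u_\Phi(t):=a_{\Phi^\star}+\bar{\nu}_{\Phi^\star}(t)$. Non-negativity of $u_\Phi$ is immediate, $t\mapsto\bar{\nu}_{\Phi^\star}(t)$ is non-increasing (hence so is $u_\Phi$), and by Tonelli's theorem $\int_0^1 u_\Phi(t)\,dt=a_{\Phi^\star}+\int_0^{+\infty}(1\wedge x)\,\nu_{\Phi^\star}(dx)<+\infty$.

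It remains to express $c_\Phi$ through the characteristics $(a_\Phi,b_\Phi,\nu_\Phi)$ of $\Phi$. By the standard asymptotics of Bernstein functions, $c_\Phi=b_{\Phi^\star}=\lim_{\lambda\to+\infty}\Phi^\star(\lambda)/\lambda=\lim_{\lambda\to+\infty}1/\Phi(\lambda)$. If $b_\Phi>0$ then $\Phi(\lambda)\ge b_\Phi\lambda\to+\infty$, so $c_\Phi=0$; if $b_\Phi=0$ then monotone convergence gives $\Phi(\lambda)\uparrow a_\Phi+\nu_\Phi(0,+\infty)$ as $\lambda\to+\infty$, so $c_\Phi=(a_\Phi+\nu_\Phi(0,+\infty))^{-1}$, to be read as $0$ when $\nu_\Phi(0,+\infty)=+\infty$. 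That last situation, together with $a_\Phi=b_\Phi=0$, is exactly the ``in particular'' assertion: then $c_\Phi=0$ and $U_\Phi(dt)=u_\Phi(t)\,dt$. I expect the only genuinely delicate point to be the very first step — guaranteeing that $U_\Phi$ is a well-defined, locally finite measure on $[0,+\infty)$ before invoking uniqueness of Laplace transforms, which is precisely what finiteness of $\Phi(z)^{-1}$ provides — together with the care needed to interpret $c_\Phi$ as a limit when $\Phi(+\infty)=+\infty$.
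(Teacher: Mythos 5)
Your proposal is correct. Note that the paper does not prove this statement at all: it is quoted verbatim from \cite[Theorem 11.3]{schilling2012bernstein}, and your argument --- computing $\int_{[0,+\infty)}e^{-zt}U_\Phi(dt)=1/\Phi(z)=\Phi^\star(z)/z$, expanding $\Phi^\star$ via its L\'evy--Khintchine triple, and matching Laplace transforms to read off $c_\Phi=b_{\Phi^\star}$ and $u_\Phi(t)=a_{\Phi^\star}+\bar\nu_{\Phi^\star}(t)$, with $c_\Phi$ then identified through $\lim_{\lambda\to+\infty}1/\Phi(\lambda)$ --- is essentially the standard proof given in that reference, with the delicate points (local finiteness of $U_\Phi$ before invoking uniqueness, and the interpretation of the limit when $\Phi(+\infty)=+\infty$) correctly handled.
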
 
	We consider the following Assumption except where otherwise specified.
	\begin{assp}\label{assp}
		$\Phi \in \SBF$ and it holds $a_\Phi=b_\Phi=0$ and $\nu_\Phi(0,+\infty)=+\infty$. Moreover, there exist $t_0>0$, $C>0$ and $\beta \in (0,1)$ such that
		\begin{equation*}
		u_\Phi(t)\le Ct^{\beta-1}, \ t \in (0,t_0).
		\end{equation*} 
	\end{assp}
	\begin{rmk}\label{rmk:contU}
		The previous Assumption guarantees that for any $T>0$ there exists a constant $C$ such that
		\begin{equation*}
		U_\Phi(t)\le C t^\beta, \ t \in [0,T].
		\end{equation*}
	\end{rmk}

\section{Generalized Caputo derivatives and generalized fractional Ordinary Differential Equations}\label{Sec3}
Now we can introduce the main arguments of this work, i.e. the following generalization of Caputo derivatives.
\begin{defn}\label{def:genCap}
	Set a Banach space $(X,|\cdot|)$. For any function $f:[0,+\infty) \to X$ we define the generalized Caputo derivative of $f$ induced by $\Phi \in \BF$ as
	\begin{equation*}
	\partial_t^\Phi f(t)=\der{}{t}\int_0^t \overline{\nu}_\Phi(t-s)(f(s)-f(0))ds,
	\end{equation*}
	where the integral is intended as a Bochner integral, provided the involved quantities exist. In particular, if $f$ is absolutely continuous, then
	\begin{equation*}
	\partial_t^\Phi f(t)=\int_0^t \overline{\nu}_\Phi(t-s)\der{f}{t}(s)ds.
	\end{equation*}
\end{defn}
As we stated before, such operator are generalizations of the well-known Caputo fractional derivatives, that are achieved in the case $\Phi(\lambda)=\lambda^\alpha$. They were introduced in \cite{kochubei2011general} in the case of complete Bernstein functions and then in \cite{toaldo2015convolution} for general Bernstein functions.\\
Concerning the Laplace transform of $\partial_t^\Phi$, let us observe that if $\partial_t^\Phi f(t)$ is well-defined, then $\overline{\nu}_\Phi(\cdot) \ast (f(\cdot)-f(0))$ is absolutely continuous (where $\ast$ is the convolution product). Thus we can use \cite[Corollary $1.6.6$]{arendt2001vector} to guarantee that if $\cL_{t \to z}[\partial_t^\Phi f(t)](z)$ and $\cL_{t \to z}[f(t)](z)$ exist, then 
\begin{equation}\label{LaptransCap}
\cL_{t \to Z}[\partial_t^\Phi f(t)](\lambda)=\Phi(Z)\cL_{t \to z}[f(t)](\lambda)-\frac{\Phi(z)}{z}f(0).
\end{equation}
Concerning the inversion of generalized Caputo derivatives, we specifically need the assumption $\Phi \in \SBF$ as stated in \cite{meerschaert2019relaxation}.
\begin{defn}\label{def:genint}
	Set a Banach space $X$. For any function $f:[0,+\infty) \to X$, the generalized fractional integral of $f$ induced by $\Phi \in \SBF$ is defined as the operator
	\begin{equation*}
	\cI^\Phi_t f(t)=\int_0^{t}u_\Phi(t-\tau)f(\tau)d\tau
	\end{equation*}
	where $u_\Phi$ is the potential density of $\Phi$ and the integral is interpreted as Bochner integral, provided all the involved quantities exist.
\end{defn}
As shown in \cite{meerschaert2019relaxation}, if $f$ is a function such that $\partial_t^\Phi f$ is well defined, then
\begin{equation}\label{invid}
\cI^\Phi_t \partial_t^\Phi f(t)=\partial_t^\Phi \cI^\Phi_t f(t)=f(t)-f(0)
\end{equation}
thus we can see the operator $\cI^\Phi_t$ as the inverse of the generalized Caputo derivative $\partial_t^\Phi$. In the case $\Phi(\lambda)=\lambda^\alpha$ we obtain the usual fractional integral.\\
From now on, we fix a Banach space $(X,|\cdot|)$. In this work we want to study Cauchy problems of the form
\begin{equation}\label{Cprob}
\begin{cases}
\partial_t^\Phi f(t)=F(t,f(t)) & \mbox{a.e. }t \in [0,T]\\
f(0)=f_0.
\end{cases}
\end{equation} 
In particular we want to show an existence and uniqueness result via Picard iterations and contraction theorem. To do this, as usual, we first need to recast the aforementioned Cauchy problem as an integral equation.
\begin{lem}
	Let $F:[0,T] \times X \to X$ and $f_0 \in X$. The function $f:[0,T] \to X$ is a solution of the following generalized fractional Cauchy problem
	\begin{equation}\label{CP}
	\begin{cases}
	\partial_t^\Phi f(t)=F(t,f(t)) & \mbox{a.e. }t \in [0,T]\\
	f(0)=f_0.
	\end{cases}
	\end{equation}
	if and only if it is a solution of the integral equation
	\begin{equation}\label{IE}
	f(t)=f_0+\cI_t^\Phi F(t,f(t)), \qquad t \in [0,T],
	\end{equation}
	provided that $F(\cdot,f(\cdot)) \in L^1([0,T],X)$.
\end{lem}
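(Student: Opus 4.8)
The plan is the standard reduction of a fractional Cauchy problem to a fixed-point equation, resting on three facts: the inversion identity \eqref{invid}; that $\partial_t^\Phi$ annihilates constants; and that $\cI_t^\Phi$ applied to an $L^1$ function vanishes at the origin.

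For the implication \eqref{CP} $\Rightarrow$ \eqref{IE} I would start from a solution $f$ of \eqref{CP}: by hypothesis $\partial_t^\Phi f$ is well defined and, since it agrees a.e.\ with $F(\cdot,f(\cdot))\in L^1([0,T],X)$, it lies in $L^1([0,T],X)$. I would then apply $\cI_t^\Phi$ to both sides of the equation. This is legitimate because $\cI_t^\Phi g=u_\Phi\ast g$ is well defined and bounded on $L^1([0,T],X)$ by Young's inequality for the Bochner convolution (recall $u_\Phi\in L^1_{\rm loc}[0,+\infty)$ under Assumption \ref{assp}), and because $\cI_t^\Phi$ is an integral operator, hence insensitive to the null set where the equation may fail. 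By \eqref{invid} the left-hand side then equals $f(t)-f(0)=f(t)-f_0$, and rearranging gives \eqref{IE}.

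For the converse I would take $f$ satisfying \eqref{IE} with $F(\cdot,f(\cdot))\in L^1([0,T],X)$ and proceed in two steps. First, letting $t\to 0^+$: since $\cI_t^\Phi F(t,f(t))\to 0$ — a consequence of $F(\cdot,f(\cdot))\in L^1$ together with $U_\Phi(t)\to 0$ (Remark \ref{rmk:contU}) — one obtains $f(0)=f_0$. Second, I would verify the equation by applying $\partial_t^\Phi$ to \eqref{IE}: directly from the definition, $\partial_t^\Phi f_0=\der{}{t}\int_0^t\overline{\nu}_\Phi(t-s)(f_0-f_0)\,ds=0$, while for the remaining term I would combine \eqref{laptranstail} with $\cL_{t\to z}[u_\Phi](z)=1/\Phi(z)$ — valid since $c_\Phi=0$ under Assumption \ref{assp} — and uniqueness of Laplace transforms to get $\overline{\nu}_\Phi\ast u_\Phi=1$ a.e.\ on $(0,+\infty)$, whence, by associativity of the Bochner convolution,
\begin{equation*}
\overline{\nu}_\Phi\ast(f-f_0)=\overline{\nu}_\Phi\ast u_\Phi\ast F(\cdot,f(\cdot))=\int_0^{\cdot}F(s,f(s))\,ds,
\end{equation*}
which is absolutely continuous because $F(\cdot,f(\cdot))\in L^1([0,T],X)$; differentiating shows at once that $\partial_t^\Phi f$ is well defined and equals $F(t,f(t))$ for a.e.\ $t\in[0,T]$. (Conceptually this last step is just \eqref{invid} applied to $\cI_t^\Phi F(\cdot,f(\cdot))$, which vanishes at $0$.)

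I expect the real work to lie not in the algebra but in the regularity bookkeeping: one must ensure that $\partial_t^\Phi f$ and $\cI_t^\Phi F(\cdot,f(\cdot))$ genuinely make sense — which is exactly where the hypothesis $F(\cdot,f(\cdot))\in L^1([0,T],X)$ and Assumption \ref{assp} (through $u_\Phi\in L^1_{\rm loc}$ and $c_\Phi=0$) enter, and what renders \eqref{invid} applicable — and one must recover the pointwise initial condition $f(0)=f_0$ from the integral equation, which is really the only non-formal point and is handled through the decay $U_\Phi(t)\to 0$ from Remark \ref{rmk:contU}.
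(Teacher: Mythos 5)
Your argument follows the paper's proof essentially verbatim: apply $\cI_t^\Phi$ and use \eqref{invid} in one direction, apply $\partial_t^\Phi$ together with $\partial_t^\Phi f_0=0$ in the other, your Laplace-transform verification of $\overline{\nu}_\Phi\ast u_\Phi=1$ being just an unwinding of \eqref{invid}. One small caution: your justification of $f(0)=f_0$ via the limit $t\to 0^+$ using only $F(\cdot,f(\cdot))\in L^1$ and $U_\Phi(t)\to 0$ is not actually valid for a general $L^1$ integrand (convolving $u_\Phi(t)\le Ct^{\beta-1}$ with $s^{-\gamma}$ for $\beta<\gamma<1$ produces $t^{\beta-\gamma}$, which blows up at the origin), but no limit is needed: evaluating \eqref{IE} at $t=0$ gives $f(0)=f_0$ outright, since the integral over the degenerate interval vanishes, which is how the paper concludes.
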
 
\begin{proof}
	Let us first suppose that $f$ is solution of \eqref{CP}. Then, applying $\cI^\Phi_t$ to both sides of the first equation, substituting $f(0)=f_0$ and by relation \eqref{invid}, we get Equation \eqref{IE}.\\
	Vice versa, if we suppose $f$ is solution of \eqref{IE}, the $f(t)$ is continuous since the right-hand side of \eqref{IE} is continuous. Moreover, we can apply the operator $\partial_t^\Phi$ on both sides of such equation (since the right-hand side admits a generalized Caputo derivative), recalling that $\partial_t^\Phi f_0=0$, to achieve the first equation of \eqref{CP}. The initial condition follows from the fact that $\cI_0^\Phi f(t)=f_0$.
\end{proof}
Now that we have such Lemma, we are ready to show the local existence, uniqueness and regularity Theorem. Before giving its statement, let us set some notation. For any time interval $J=[0,T]$ and any $K \subseteq X$ we denote by $C(J,K)$ the space of continuous functions $f:J \to K$. For any $\beta \in (0,1)$ we denote by $C^\beta(J,K)$ the space of H\"older-continuous functions $f:J \to K$ of order $\beta$, i.e. such that for some $L>0$
\begin{equation*}
|f(t)-f(s)|\le L|t-s|^\beta, \ t,s \in J.
\end{equation*}
Now we can give the statement of the main Theorem of this section.
\begin{thm}\label{ex}
	Let $\Phi\in \SBF$ satisfy Assumption \ref{assp}, set $J=[0,T]$ and $F:J \times X \to X$. Suppose the following conditions hold:
	\begin{itemize}
		\item[$A1$] For any ball $B_R=\{x \in X: \ |x|<R\}$ in $X$ there exists a constant $C_R>0$ such that $|F(t,x)|\le C_R$ for almost any $t \in J$ and any $x \in B_R$;
		\item[$A2$] For any ball $B_R$ in $X$ there exists a constant $L_R>0$ such that $|F(t,x)-F(t,z)|\le L_R|x-z|$ for almost any $t \in J$ and any $x,z \in B_R$.
	\end{itemize}
	Then for any initial datum $f_0 \in X$ there exists $T'>0$ such that equation \eqref{Cprob} admits a unique solution $f \in C^{\beta}(J',B_R(f_0))$ where $J'=[0,T']$ and $B_R(f_0)=\{x \in X: \ |x-f_0|<R\}$.
\end{thm}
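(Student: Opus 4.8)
The plan is to recast \eqref{Cprob}, through the preceding Lemma, as a fixed-point equation $f=\mathcal{T}f$ on a sufficiently short time interval and to solve it by the Banach contraction theorem, where
\[
(\mathcal{T}h)(t):=f_0+\cI^\Phi_t F(t,h(t))=f_0+\int_0^t u_\Phi(t-\tau)\,F(\tau,h(\tau))\,d\tau .
\]
I would fix $R>0$, put $R':=|f_0|+R$, and work in $\mathcal{M}:=\{h\in C(J',X):\Norm{h-f_0}{\infty}\le R\}$, which is closed in $C(J',X)$ and hence a complete metric space for the uniform distance. For $h\in\mathcal{M}$ one has $|h(t)|\le R'$, so $t\mapsto F(t,h(t))$ is strongly measurable (using measurability of $F$ in $t$, the Lipschitz continuity in $x$ from $A2$, and continuity of $h$) and, by $A1$, bounded a.e.\ by $C_{R'}$; since $u_\Phi\in L^1_{\mathrm{loc}}([0,+\infty))$, the Bochner integral defining $\mathcal{T}h$ exists, and continuity of $\mathcal{T}h$ will follow from the H\"older estimate below (which only uses boundedness of the integrand). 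Thus $\mathcal{T}:\mathcal{M}\to C(J',X)$.

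Both key estimates rely on Remark \ref{rmk:contU}, which gives $C>0$ with $U_\Phi(t)=\int_0^t u_\Phi(\sigma)\,d\sigma\le Ct^\beta$ on $[0,T]$. For $h\in\mathcal{M}$, $A1$ yields $|(\mathcal{T}h)(t)-f_0|\le C_{R'}U_\Phi(t)\le C_{R'}C(T')^\beta$ for $t\in J'$, so choosing $T'$ small enough that $C_{R'}C(T')^\beta<R$ gives $\mathcal{T}(\mathcal{M})\subseteq\mathcal{M}$, with values in the open ball $B_R(f_0)$. For $h_1,h_2\in\mathcal{M}$, $A2$ yields
\[
|(\mathcal{T}h_1)(t)-(\mathcal{T}h_2)(t)|\le L_{R'}\int_0^t u_\Phi(t-\tau)|h_1(\tau)-h_2(\tau)|\,d\tau\le L_{R'}C(T')^\beta\Norm{h_1-h_2}{\infty},
\]
so, shrinking $T'$ further so that $q:=L_{R'}C(T')^\beta<1$, the map $\mathcal{T}$ is a $q$-contraction on $\mathcal{M}$. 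The Banach fixed-point theorem then gives a unique $f\in\mathcal{M}$ with $f=\mathcal{T}f$, the strict self-mapping bound shows $f$ takes values in $B_R(f_0)$, and by the preceding Lemma $f$ solves \eqref{Cprob}. For uniqueness, any solution in $C^\beta(J',B_R(f_0))$ lies in $\mathcal{M}$ and is a fixed point of $\mathcal{T}$, hence equals $f$.

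For the H\"older regularity I would argue directly on $\mathcal{T}h$ for $h\in\mathcal{M}$ (this also yields the continuity used above and, for $h=f$, that $f\in C^\beta(J',B_R(f_0))$). Setting $g:=F(\cdot,h(\cdot))$, so $|g|\le C_{R'}$ a.e., for $0\le s<t\le T'$ one splits
\[
(\mathcal{T}h)(t)-(\mathcal{T}h)(s)=\int_s^t u_\Phi(t-\tau)g(\tau)\,d\tau+\int_0^s\bigl(u_\Phi(t-\tau)-u_\Phi(s-\tau)\bigr)g(\tau)\,d\tau .
\]
The first term has norm at most $C_{R'}\int_0^{t-s}u_\Phi(\sigma)\,d\sigma=C_{R'}U_\Phi(t-s)$. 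In the second, $u_\Phi$ being non-negative and non-increasing gives $u_\Phi(t-\tau)\le u_\Phi(s-\tau)$ on $(0,s)$, and a change of variables gives $\int_0^s\bigl(u_\Phi(s-\tau)-u_\Phi(t-\tau)\bigr)d\tau=U_\Phi(s)-U_\Phi(t)+U_\Phi(t-s)\le U_\Phi(t-s)$ since $U_\Phi$ is non-decreasing; so the second term also has norm at most $C_{R'}U_\Phi(t-s)$. Hence $|(\mathcal{T}h)(t)-(\mathcal{T}h)(s)|\le 2C_{R'}U_\Phi(t-s)\le 2CC_{R'}(t-s)^\beta$, which is the H\"older bound of order $\beta$. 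This estimate is the one genuinely non-routine step: it crucially uses the monotonicity of the potential density $u_\Phi$ together with the polynomial control $U_\Phi(t)\le Ct^\beta$ from Assumption \ref{assp} --- the same control that forces the contraction constant to be small for small $T'$ --- everything else being the standard contraction-mapping machinery together with routine measurability and Bochner-integral checks.
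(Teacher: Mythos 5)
Your proposal is correct, and it follows the paper's architecture almost exactly: the same Picard operator $f\mapsto f_0+\cI^\Phi_t F(\cdot,f(\cdot))$, the same self-mapping condition $C_{\widetilde R}\,U_\Phi(T')<R$, and the same H\"older estimate obtained by splitting the increment and using the monotonicity of $u_\Phi$ together with $U_\Phi(t)\le Ct^\beta$ (your identity $U_\Phi(s)-U_\Phi(t)+U_\Phi(t-s)\le U_\Phi(t-s)$ is in fact a slightly cleaner justification than the paper's appeal to subadditivity). The one genuine difference is the contraction step: you obtain a contraction in the plain supremum norm by shrinking $T'$ until $L_{R'}C(T')^\beta<1$, whereas the paper keeps $T'$ fixed by the self-mapping condition alone and instead works with the Bielecki norm $\Norm{f}{\tau}=\max_{t}|f(t)|e^{-\tau t}$, using H\"older's inequality with $p\in(1,\tfrac1{1-\beta})$ to make the Lipschitz factor small by taking $\tau$ large. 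Your route is more elementary and perfectly adequate for the theorem as stated; what the Bielecki norm buys the paper is that the \emph{only} constraint on the existence time comes from $C_{\widetilde R}U_\Phi(T')<R$, a fact the author explicitly flags and exploits later (the characterization $T'=U^{\leftarrow}(\cdot)$ in Corollary \ref{exandun} and the uniform existence intervals in Propositions \ref{prop:simex1}--\ref{prop:simex2}); with your choice, $T'$ would additionally depend on the Lipschitz constant. Two cosmetic points: since you work on the closed set $\Norm{h-f_0}{\infty}\le R$, the bound $|h(t)|\le R'$ is not strict, so you should invoke $A1$--$A2$ with a slightly larger ball (say $B_{R'+1}$); and measurability of $t\mapsto F(t,h(t))$ is, as in the paper, an implicit Carath\'eodory-type hypothesis rather than something that follows from $A1$--$A2$ alone.
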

The proof of the Theorem will be articulated in the following two Subsections:
\begin{itemize}
	\item We first need to introduce what will be our Picard iterative operator $A$, taking in consideration the structure of equation \eqref{IE} and showing that for any $f \in C(J,B_R(f_0))$, $Af \in C^{\beta}(J,B_R(f_0))$;
	\item Then we just need to show that the operator is actually a contraction when we choose the right norm on $C(J',B_R(f_0))$.
\end{itemize}
\subsection{Definition of the Picard operator}
Let us define the following Bielecki-type norm on $C(J,X)$ for any $\tau\ge 0$ as
\begin{equation*}
f \in C(J,X) \mapsto \Norm{f}{\tau}=\max_{t \in J}|f(t)|e^{-\tau t},
\end{equation*}
where for $\tau=0$ we have the usual supremum norm. Fix $f_0 \in X$ and $R>0$ and consider
\begin{equation*}
A:(C(J,B_R(f_0)), \Norm{\cdot}{\tau}) \to (C(J,X), \Norm{\cdot}{\tau})
\end{equation*}
defined as
\begin{equation*}
Af(t)=f_0+\cI_t^\Phi F(t,f(t));
\end{equation*}
$A$ will be our Picard operator, observing that any fixed point of $A$ is solution of \eqref{IE}. 
Let us first show that $A$ is well defined.
\begin{lem}\label{lemreg}
	The operator $A$ is well defined and its range is contained in $C^{\beta}(J,X)$.
\end{lem}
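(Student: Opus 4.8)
The plan is to show two things: first, that $Af$ is well-defined for every $f \in C(J, B_R(f_0))$, i.e. that the Bochner integral $\cI_t^\Phi F(t,f(t)) = \int_0^t u_\Phi(t-\tau) F(\tau, f(\tau))\, d\tau$ makes sense; second, that the resulting function lies in $C^\beta(J,X)$. For the first point, fix $f \in C(J, B_R(f_0))$. Then $f(J) \subseteq \overline{B_R(f_0)} \subseteq B_{R'}$ for $R' = R + |f_0|$, so by hypothesis $A1$ there is $C_{R'}>0$ with $|F(\tau, f(\tau))| \le C_{R'}$ for a.e.\ $\tau \in J$; also $\tau \mapsto F(\tau, f(\tau))$ is (strongly) measurable, being an a.e.\ limit of compositions or by the Carath\'eodory-type structure implicit in $A1$–$A2$. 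Hence the integrand $u_\Phi(t-\tau) F(\tau,f(\tau))$ is Bochner integrable on $[0,t]$ precisely when $u_\Phi \in L^1(0,t)$, which holds by Assumption \ref{assp} (indeed $\int_0^1 u_\Phi < \infty$ and $u_\Phi$ is non-increasing, so $u_\Phi \in L^1(0,T)$). This gives $|Af(t) - f_0| \le C_{R'}\int_0^t u_\Phi(t-\tau)\, d\tau = C_{R'} U_\Phi(t)$.

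For the H\"older regularity, I would estimate $|Af(t) - Af(s)|$ for $0 \le s < t \le T$ directly. Write
\begin{equation*}
Af(t) - Af(s) = \int_0^t u_\Phi(t-\tau) F(\tau,f(\tau))\, d\tau - \int_0^s u_\Phi(s-\tau) F(\tau,f(\tau))\, d\tau.
\end{equation*}
Splitting the first integral at $\tau = s$ and changing variables, one gets a contribution $\int_s^t u_\Phi(t-\tau) F(\tau,f(\tau))\, d\tau$, bounded in norm by $C_{R'}\int_0^{t-s} u_\Phi(r)\, dr = C_{R'} U_\Phi(t-s)$, plus the term $\int_0^s \bigl(u_\Phi(t-\tau) - u_\Phi(s-\tau)\bigr) F(\tau,f(\tau))\, d\tau$. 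Since $u_\Phi$ is non-increasing and non-negative, $u_\Phi(t-\tau) - u_\Phi(s-\tau) \le 0$, so its norm is bounded by $C_{R'}\int_0^s \bigl(u_\Phi(s-\tau) - u_\Phi(t-\tau)\bigr)\, d\tau = C_{R'}\bigl(U_\Phi(s) - U_\Phi(t) + U_\Phi(t-s)\bigr)$, using the change of variables $U_\Phi(t) - U_\Phi(t-s) = \int_{t-s}^t u_\Phi(r)\,dr \ge \int_0^s u_\Phi(r)\,dr$ valid again by monotonicity. Collecting terms yields $|Af(t) - Af(s)| \le 2 C_{R'}\, U_\Phi(t-s)$, and by Remark \ref{rmk:contU} there is a constant $C$ with $U_\Phi(t-s) \le C (t-s)^\beta$ on $[0,T]$. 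Hence $Af \in C^\beta(J,X)$ with H\"older constant $2 C C_{R'}$, which in particular gives continuity, so $A$ maps into $C(J,X)$ as claimed.

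I expect the main subtlety to be the monotonicity bookkeeping in the second term — getting the signs right when comparing $u_\Phi(t-\tau)$ and $u_\Phi(s-\tau)$ and collapsing everything into a clean multiple of $U_\Phi(t-s)$ — rather than any genuinely deep difficulty; all the hard analytic input (the $t^{\beta-1}$ bound on $u_\Phi$, hence the $t^\beta$ bound on $U_\Phi$) is already packaged in Assumption \ref{assp} and Remark \ref{rmk:contU}. A minor additional point to check is measurability of $\tau \mapsto F(\tau, f(\tau))$, which follows from $A2$ (continuity in the second variable uniformly on balls) together with measurability in $t$ implicit in $A1$, via the standard Carath\'eodory argument. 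Note that this Lemma does not yet assert $A$ maps $C(J, B_R(f_0))$ into itself — that requires shrinking $T$ so that $C C_{R'} T^\beta < R$, which will be handled when the contraction is set up on $J' = [0,T']$.
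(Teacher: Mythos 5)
Your proof is correct and follows essentially the same route as the paper: split the increment $Af(t)-Af(s)$ into the tail integral over $[s,t]$ and the difference term over $[0,s]$, bound both by a constant times $U_\Phi(t-s)$ using hypothesis $A1$ together with the monotonicity/subadditivity of $U_\Phi$, and invoke Remark \ref{rmk:contU} to get the $\beta$-H\"older bound. The only additions are your explicit checks of Bochner integrability and of the measurability of $\tau\mapsto F(\tau,f(\tau))$, which the paper leaves implicit.
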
 
\begin{proof}
Fix $\delta>0$, $\varepsilon \in (0,\beta)$ and $f \in C(J,B_R(f_0))$. First of all, let us recall that if $x \in B_R(f_0)$, then $|x|\le |x-f_0|+|f_0|<R+|f_0|$. Defining $\widetilde{R}=R+|f_0|$, we have $f(t)\in B_{\widetilde{R}}$ for any $t \in J$. By definition of $A$ we have
\begin{align}
\label{estA}
\begin{split}
|Af(t+\delta)-Af(t)|&= \left|\int_0^{t}u_\Phi(t-s)F(s,f(s))ds-\int_0^{t+\delta}u_\Phi(t+\delta-s)F(s,f(s))ds\right|\\
&\le \int_0^{t}|u_\Phi(t-s)-u_\Phi(t+\delta-s)||F(s,f(s))|ds\\
&+\int_t^{t+\delta}u_\Phi(t+\delta -s)|F(s,f(s))|ds\\
&=I_1(t)+I_2(t).
\end{split}
\end{align}
Let us first consider $I_2(t)$. Since $f(t) \in B_{\widetilde{R}}$ for any $t \in [0,T]$, by hypothesis $A1$ we have
\begin{equation*}
I_2(t)\le  C_{\widetilde{R}}\int_t^{t+\delta}u_\Phi(t+\delta-s)ds=C_{\widetilde{R}}U(\delta)\le C\delta^{\beta}.
\end{equation*}
Concerning $I_1$ we have
\begin{equation*}
I_1(t)\le C_{\widetilde{R}}(U(t+\delta)-U(t)+U(\delta))\le 2C_{\widetilde{R}}U(\delta)\le C\delta^\beta,
\end{equation*}
where we used the subadditivity of the function $U$. Hence we have
\begin{equation*}
|Af(t+\delta)-Af(t)|\le C\delta^{\beta},
\end{equation*}
concluding the proof.
\end{proof} 
Actually, we need the range to be contained in $C(J,B_R(f_0))$, thus we will need the following technical Lemma.
\begin{lem}\label{lemT1}
	There exists $T'(R) \le T$ such that for any $f \in C(J,B_R(f_0))$ it holds $Af_{|J'} \in C(J',B_R(f_0))$.
\end{lem}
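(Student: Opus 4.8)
The plan is to estimate $|Af(t)-f_0|$ directly from the definition of the generalized fractional integral, exploiting the subpolynomial growth of the potential measure provided by Remark \ref{rmk:contU}, so that for small enough times this quantity stays below $R$.

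First I would fix $f \in C(J,B_R(f_0))$ and, exactly as in the proof of Lemma \ref{lemreg}, set $\widetilde{R}=R+|f_0|$, so that $f(s)\in B_{\widetilde{R}}$ for every $s\in J$. Writing $Af(t)-f_0=\cI_t^\Phi F(t,f(t))=\int_0^t u_\Phi(t-s)F(s,f(s))\,ds$ and using the non-negativity of $u_\Phi$ together with hypothesis $A1$ applied to the ball $B_{\widetilde{R}}$, I would obtain
$$|Af(t)-f_0|\le \int_0^t u_\Phi(t-s)|F(s,f(s))|\,ds\le C_{\widetilde{R}}\int_0^t u_\Phi(t-s)\,ds=C_{\widetilde{R}}\,U_\Phi(t).$$

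Next, by Remark \ref{rmk:contU} there is a constant $C>0$ (depending on $T$, not on $R$) with $U_\Phi(t)\le Ct^\beta$ for all $t\in[0,T]$, so $|Af(t)-f_0|\le C_{\widetilde{R}}\,C\,t^\beta$ on $J$. I would then set
$$T'(R)=\min\left\{T,\ \left(\frac{R}{2C_{\widetilde{R}}C}\right)^{1/\beta}\right\},$$
which forces $|Af(t)-f_0|\le R/2<R$ for every $t\in J'=[0,T']$, i.e. $Af(t)\in B_R(f_0)$ for such $t$. Since Lemma \ref{lemreg} already gives $Af\in C^\beta(J,X)\subseteq C(J,X)$, the restriction $Af_{|J'}$ is continuous and takes values in $B_R(f_0)$, whence $Af_{|J'}\in C(J',B_R(f_0))$.

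I do not expect a genuine obstacle: the computation is routine once Remark \ref{rmk:contU} is available. The only points requiring a little care are that the constant $C$ in $U_\Phi(t)\le Ct^\beta$ depends on the final time $T$ while $C_{\widetilde{R}}$ depends on $R$ through $\widetilde{R}$, so both must be fixed before $T'(R)$ is defined; and that the resulting $T'(R)$ is independent of the particular $f\in C(J,B_R(f_0))$ — which it visibly is — since exactly this uniformity is what the contraction argument in the next subsection relies on.
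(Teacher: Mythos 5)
Your proposal is correct and follows essentially the same route as the paper: the key estimate $|Af(t)-f_0|\le C_{\widetilde{R}}\,U_\Phi(t)$ is identical, and the paper then simply picks $T'$ with $C_{\widetilde{R}}U_\Phi(T')<R$ (using monotonicity of $U_\Phi$ and the fact that $U_\Phi(t)\to 0$ as $t\to 0$, which is exactly what Remark \ref{rmk:contU} guarantees), whereas you make the same choice explicit via the bound $U_\Phi(t)\le Ct^\beta$. This is a cosmetic difference only.
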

\begin{proof}
Arguing as before we have
\begin{equation*}
|Af(t)-f_0|\le\int_0^t u(t-s)|F(s,f(s))|ds\le C_{\widetilde{R}} U(t).
\end{equation*}
Now let us choose $T'>0$ such that $C_{\widetilde{R}}U(T')<R$ to obtain
\begin{equation*}
|Af(t)-f_0|\le C_{\widetilde{R}} U(t)\le  C_{\widetilde{R}} U(T')<R,
\end{equation*}
concluding the proof.
\end{proof}
\begin{rmk}
	The previous Lemma is the only one in which a condition on the time horizon $T'$ is imposed. This observation will come useful in what follows.
\end{rmk}
In particular, now we can consider
\begin{equation*}
A:(C(J',B_R(f_0)), \Norm{\cdot}{\tau}) \to (C(J',B_R(f_0)), \Norm{\cdot}{\tau}),
\end{equation*}
recalling that the range of $A$ is contained in $C^\beta(J',B_R(f_0))$.
\subsection{Contraction property with respect to the Bielecki norm}
Now let us show that $A$ is a contraction for some choice of $\tau>0$. Let us first recall that this norm is equivalent to the one with $\tau=0$. Indeed we have
\begin{equation*}
|f(t)|e^{-\tau T'}\le |f(t)|e^{-\tau t}\le |f(t)|
\end{equation*}
that, taking the maximum for $t \in J'$, gives the equivalence. Let us also recall that $(C(J',B_R(f_0)),\Norm{\cdot}{\tau})$ is a Banach space for any $\tau>0$.\\
Now let us show the following Proposition.
\begin{prop}
There exists $\tau$ such that $A$ is a contraction on $(C(J',B_R(f_0)), \Norm{\cdot}{\tau})$.
\end{prop}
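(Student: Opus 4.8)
The plan is to bound $\Norm{Af-Ag}{\tau}$ by a constant times $\Norm{f-g}{\tau}$, where the constant is made $<1$ by choosing $\tau$ large. Fix $f,g\in C(J',B_R(f_0))$; as in the proof of Lemma \ref{lemreg}, both $f(t)$ and $g(t)$ lie in $B_{\widetilde R}$ with $\widetilde R=R+|f_0|$ for every $t\in J'$. From the definition of $A$ and hypothesis $A2$ applied with radius $\widetilde R$, for $t\in J'$
\begin{equation*}
|Af(t)-Ag(t)|\le\int_0^t u_\Phi(t-s)\,|F(s,f(s))-F(s,g(s))|\,ds\le L_{\widetilde R}\int_0^t u_\Phi(t-s)\,|f(s)-g(s)|\,ds .
\end{equation*}

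Next I would bring in the Bielecki weight. Multiplying by $e^{-\tau t}$ and writing $e^{-\tau t}=e^{-\tau(t-s)}e^{-\tau s}$ inside the integral,
\begin{equation*}
e^{-\tau t}|Af(t)-Ag(t)|\le L_{\widetilde R}\int_0^t u_\Phi(t-s)e^{-\tau(t-s)}\bigl(e^{-\tau s}|f(s)-g(s)|\bigr)\,ds\le L_{\widetilde R}\,\Norm{f-g}{\tau}\int_0^t u_\Phi(t-s)e^{-\tau(t-s)}\,ds .
\end{equation*}
After the substitution $r=t-s$ the kernel integral equals $\int_0^t u_\Phi(r)e^{-\tau r}\,dr\le\int_0^{+\infty}u_\Phi(r)e^{-\tau r}\,dr$, a bound uniform in $t\in J'$. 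Taking the maximum over $t\in J'$ yields
\begin{equation*}
\Norm{Af-Ag}{\tau}\le L_{\widetilde R}\left(\int_0^{+\infty}u_\Phi(r)e^{-\tau r}\,dr\right)\Norm{f-g}{\tau}.
\end{equation*}

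It then remains to control the kernel integral. Since $c_\Phi=0$ under Assumption \ref{assp}, we have $U_\Phi(dt)=u_\Phi(t)\,dt$, and the Laplace transform of the potential measure of $\sigma_\Phi$ equals $1/\Phi(\tau)$ — a fact one can recover from \eqref{Lapexp} together with $U_\Phi(t)=\E[L_\Phi(t)]$, or from the inversion identity \eqref{invid} and \eqref{LaptransCap}, since $\cI_t^\Phi$ is convolution with $u_\Phi$. Hence $\int_0^{+\infty}u_\Phi(r)e^{-\tau r}\,dr=1/\Phi(\tau)$. Because $\Phi$ is non-decreasing with $a_\Phi=b_\Phi=0$ and $\nu_\Phi(0,+\infty)=+\infty$, the L\'evy-Khintchine representation \eqref{eq:LKrepr} together with monotone convergence gives $\Phi(\tau)\to+\infty$ as $\tau\to+\infty$; so I can fix $\tau>0$ with $L_{\widetilde R}<\Phi(\tau)$. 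For that $\tau$ the operator $A$ is a contraction on $(C(J',B_R(f_0)),\Norm{\cdot}{\tau})$ with ratio $L_{\widetilde R}/\Phi(\tau)<1$, which (using completeness of this space and the fact that Lemma \ref{lemT1} keeps $A$ into it) yields, through Banach's fixed point theorem, the solution asserted in Theorem \ref{ex}.

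The \emph{only} step that is not entirely routine is the evaluation of $\int_0^{+\infty}u_\Phi(r)e^{-\tau r}\,dr$, and in particular the fact that it vanishes as $\tau\to+\infty$ regardless of how large $L_{\widetilde R}$ is; the Lipschitz estimate and the splitting of the exponential weight are the standard mechanics of Bielecki-norm arguments. If one prefers to avoid the Laplace identity, the same effect follows from Remark \ref{rmk:contU}: on $[0,t]\subseteq[0,T]$ an integration by parts against $U_\Phi$ together with $U_\Phi(r)\le Cr^\beta$ bounds the kernel integral by a constant multiple of $\tau^{-\beta}$, which still tends to $0$.
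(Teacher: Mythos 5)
Your proof is correct, and it takes a genuinely different route from the paper's at the one non-trivial step. After the common Lipschitz estimate $|Af(t)-Ag(t)|\le L_{\widetilde{R}}\int_0^t u_\Phi(t-s)|f(s)-g(s)|\,ds$, the paper discards the exact kernel, replacing $u_\Phi(t-s)$ by $C(t-s)^{\beta-1}$ via Assumption \ref{assp}, and then controls $\int_0^t(t-s)^{\beta-1}e^{\tau s}\,ds$ by H\"older's inequality with an exponent $p\in\left(1,\frac{1}{1-\beta}\right)$, which produces a contraction ratio proportional to $\tau^{-1/p'}$. You instead keep $u_\Phi$ intact, split the weight as $e^{-\tau t}=e^{-\tau(t-s)}e^{-\tau s}$, and identify the kernel integral with the Laplace transform of the potential density, $\int_0^{+\infty}u_\Phi(r)e^{-\tau r}\,dr=1/\Phi(\tau)$ --- an identity the paper itself invokes later (citing Veillette--Taqqu) in the proof of Theorem \ref{thm:series}, and which your derivation from $U_\Phi(t)=\E[L_\Phi(t)]$ and \eqref{Lapexp} legitimately recovers; the divergence $\Phi(\tau)\to+\infty$ follows from $\nu_\Phi(0,+\infty)=+\infty$ by monotone convergence exactly as the paper argues in Section \ref{Sec4}. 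What your approach buys: no H\"older inequality, a sharper and more transparent contraction constant $L_{\widetilde{R}}/\Phi(\tau)$, and independence of this step from the power-law bound $u_\Phi(t)\le Ct^{\beta-1}$ (only the monotonicity and integrability of $u_\Phi$ near $0$, plus $\nu_\Phi(0,+\infty)=+\infty$, are used --- Assumption \ref{assp} remains needed elsewhere, e.g.\ for the H\"older regularity in Lemma \ref{lemreg}). What the paper's approach buys is that it never leaves the purely real-variable framework of the stated assumption and makes no appeal to Laplace-transform identities at this stage. Your fallback via integration by parts against $U_\Phi$ and Remark \ref{rmk:contU}, yielding a bound of order $\tau^{-\beta}$, is also sound and closer in spirit to the paper's estimate.
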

\begin{proof}
	Let us consider $f,g \in C(J',B_R(f_0))$ and consider $\widetilde{R}$ as defined before. We have
	\begin{align*}
	|Af(t)-Ag(t)|&\le \int_0^t u(t-s)|F(s,f(s))-F(s,g(s))|ds\\
	&\le L_{\widetilde{R}}\int_0^t u(t-s)|f(s)-g(s)|ds\\
	&= L_{\widetilde{R}}\int_0^t u(t-s)|f(s)-g(s)|e^{-\tau s}e^{\tau s}ds\\
	&\le L_{\widetilde{R}}\Norm{f-g}{\tau}\int_0^t u(t-s)e^{\tau s}ds\\
	&\le CL_{\widetilde{R}}\Norm{f-g}{\tau}\int_0^t (t-s)^{\beta-1}e^{\tau s}ds.
	\end{align*}
	Consider $p \in \left(1,\frac{1}{1-\beta}\right)$ (for instance $p=\frac{2-\beta}{2(1-\beta)}$) and use H\"older inequality to achieve
	\begin{align*}
	|Af(t)-Ag(t)|&\le  C L_{\widetilde{R}}\Norm{f-g}{\tau}\left(\int_0^t (t-s)^{p(\beta-1)}ds\right)^{\frac{1}{p}}\left(\int_0^t e^{p'\tau s}ds\right)^{\frac{1}{p'}}\\
	&=CL_{\widetilde{R}}\Norm{f-g}{\tau}\frac{1}{(p(\beta-1)+1)^{\frac{1}{p}}} T'^{\frac{p(\beta-1)+1}{p}}\left(\frac{1}{p'\tau}\right)^{\frac{1}{p'}}e^{\tau t}.
	\end{align*}
	Multiplying both sides of the previous equation by $e^{-\tau t}$ and then taking the maximum, we achieve
	\begin{align*}
	\Norm{Af-Ag}{\tau}&\le CL_{\widetilde{R}}\Norm{f-g}{\tau}\frac{1}{(p(\beta-1)+1)^{\frac{1}{p}}} T'^{\frac{p(\beta-1)+1}{p}}\left(\frac{1}{p'\tau}\right)^{\frac{1}{p'}}.
	\end{align*}
	Finally, choose $\tau>0$ big enough such that
	$$CL_{\widetilde{R}}\frac{1}{(p(\beta-1)+1)^{\frac{1}{p}}} T'^{\frac{p(\beta-1)+1}{p}}\left(\frac{1}{p'\tau}\right)^{\frac{1}{p'}}<1,$$
	concluding the proof.
\end{proof}

Now let us end the proof of Theorem \ref{ex}. Since $A$ is a contraction over \linebreak $(C(J',B_R(f_0)),\Norm{\cdot}{\tau})$, we know by contraction theorem (see \cite{khamsi2011introduction}) that there exists a unique fixed point $f \in C(J',B_R(f_0))$ of $A$. In particular, $f$ is the solution we are searching for. Moreover, since the range of $A$ is contained in $C^\beta(J',B_R(f_0))$ and $f=Af$ belongs to the range of $A$, we know that $f \in C^\beta(J',B_R(f_0))$, concluding the proof.
\qed
\subsection{The affine autonomous case}
As main example, let us show an easy case as the affine autonomous case.
\begin{cor}\label{exandun}
	Let $F:X \to X$ be a bounded linear operator and let $\xi \in X$. Suppose $\Phi \in \SBF$ satisfies Assumption \ref{assp}. Then there exists $T$ depending only on $\Norm{F}{L(X,X)}:=\sup_{|x|=1}|Fx|$ such that the problem
\begin{equation}\label{lCP}
\begin{cases}
\partial_t^\Phi f(t)=\xi+F f(t) & t \in [0,T)=:J\\
f(0)=f_0.
\end{cases}
\end{equation}
admits a unique solution $f \in C^\beta(J,X)$.
\end{cor}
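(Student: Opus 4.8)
The plan is to read this off Theorem \ref{ex}, the only delicate point being the assertion that the existence time $T$ can be chosen depending on $\Norm{F}{L(X,X)}$ alone. Put $G(t,x)=\xi+Fx$. Since $G$ is $t$-independent and $F$ is linear, both structural hypotheses are immediate: for $x\in B_R$ one has $|G(t,x)|\le|\xi|+\Norm{F}{L(X,X)}R=:C_R$, which is $A1$; and for $x,z\in B_R$, $|G(t,x)-G(t,z)|=|F(x-z)|\le\Norm{F}{L(X,X)}|x-z|$, which is $A2$ with Lipschitz constant $L_R=\Norm{F}{L(X,X)}$, crucially \emph{independent of $R$}. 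Theorem \ref{ex} therefore already furnishes, for each $f_0\in X$, some $T'>0$ and a unique $f\in C^\beta([0,T'],B_R(f_0))$ solving \eqref{lCP}; it remains to control how $T'$ depends on the data.

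For this I would go back into the proof of Theorem \ref{ex} and locate the single place where $T'$ is constrained: by the Remark after Lemma \ref{lemT1}, it is exactly the condition of Lemma \ref{lemT1}, namely $C_{\widetilde R}U_\Phi(T')<R$ with $\widetilde R=R+|f_0|$, which here reads $\bigl(|\xi|+\Norm{F}{L(X,X)}(R+|f_0|)\bigr)U_\Phi(T')<R$. Dividing by $R$, the left-hand side tends to $\Norm{F}{L(X,X)}U_\Phi(T')$ as $R\to+\infty$. Hence I would first fix $T$ so small that $\Norm{F}{L(X,X)}U_\Phi(T)<1$; this is possible because $U_\Phi(t)\le Ct^\beta\to0$ by Remark \ref{rmk:contU}, and it makes $T$ depend only on $\Norm{F}{L(X,X)}$ (and on the data of $\Phi$ fixed throughout). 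Then $1-\Norm{F}{L(X,X)}U_\Phi(T)>0$, so the displayed inequality is satisfied by any $R>\dfrac{(|\xi|+\Norm{F}{L(X,X)}|f_0|)\,U_\Phi(T)}{1-\Norm{F}{L(X,X)}U_\Phi(T)}$. With this $R$ and $T'=T$ every step of the proof of Theorem \ref{ex} goes through on $J=[0,T]$, producing a unique fixed point $f\in C^\beta(J,B_R(f_0))\subseteq C^\beta(J,X)$ of the Picard operator, which is the solution sought.

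Finally I would upgrade uniqueness from ``inside $B_R(f_0)$'' to ``in $C^\beta(J,X)$''. The cleanest route exploits once more the \emph{global} Lipschitz bound: any $g\in C(J,X)$ is bounded on the compact $J$, so the Picard operator $A$ is a well-defined self-map of $(C(J,X),\Norm{\cdot}{\tau})$ for \emph{any} $T$, and repeating the computation of the contraction Proposition with $L_{\widetilde R}$ replaced by $\Norm{F}{L(X,X)}$ shows $A$ is a contraction there once $\tau$ is large; its unique fixed point lies in $C^\beta(J,X)$ by the argument of Lemma \ref{lemreg}, hence is the unique solution of \eqref{lCP} in $C^\beta(J,X)$ (in fact in $C(J,X)$), and in particular the stated bound on $T$ is actually conservative. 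The main — essentially the only — obstacle is this bookkeeping of the dependence of $T$ on the data: a black-box use of Theorem \ref{ex} gives an existence time also depending on $R$ and $f_0$, and one must use the $R$-independence of the Lipschitz constant, a feature special to the affine case, to let $R\to+\infty$ and absorb the $f_0$- and $\xi$-dependence into the choice of $R$, leaving a threshold for $T$ that involves $\Norm{F}{L(X,X)}$ only.
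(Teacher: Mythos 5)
Your proposal is correct and follows essentially the same route as the paper: verify $A1$, $A2$ with the $R$-independent Lipschitz constant $\Norm{F}{L(X,X)}$, then observe that the only constraint on the existence time, $C_{\widetilde R}U_\Phi(T')<R$ from Lemma \ref{lemT1}, stabilizes as $R\to+\infty$ to a threshold depending only on $\Norm{F}{L(X,X)}$. You merely make explicit two points the paper compresses into ``sending $R\to+\infty$'' (fixing $T$ first and then choosing $R$ large, and upgrading uniqueness from $B_R(f_0)$ to all of $C(J,X)$), which is a welcome clarification rather than a different argument.
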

\begin{proof}
	Let us first observe that hypotheses $A1$ and $A2$ are satisfied for any $T>0$. In particular, for any $T>0$ and $R>0$, we have $L_R=\Norm{F}{L(X,X)}$ and we can consider
	\begin{equation*}
	C_R=|\xi|+\Norm{F}{L(X,X)}R.
	\end{equation*}
	Now we want to characterize $T'$ as defined in \ref{lemT1}. To do this, let us consider the left-continuous inverse of $U$
	\begin{equation*}
	U^{\leftarrow}(u)=\min\{x \ge 0 :U(x)\ge u\}, \qquad \forall u>0
	\end{equation*}
	and let us observe that we can define $T(R):=T'=U^{\leftarrow}\left(\frac{R}{|\xi|+\Norm{F}{L(X,X)}R+\Norm{F}{L(X,X)}|f_0|}\right)$. Thus we have, for any $R>0$, a unique solution in $C^\beta(J(R),B_R(f_0))$ where $J(R)=[0,T(R)]$. Sending $R \to +\infty$ we conclude the proof.
\end{proof}
\begin{rmk}
	Let us observe that if $\Phi \in \SBF$ satisfies Assumption \ref{assp}, then, considering any $\alpha \in (1-\beta,1)$, we have that $(t-s)^\alpha u(t-s)$ is integrable. In particular this means that the affine autonomous case can be treated as a Fredholm equation of the second kind and thus the solution can be expressed by Neumann series (see \cite{fredholm1903classe}). 
\end{rmk}
In the specific case of affine autonomous problems, we can show that if we have a global solution, then it is unique.
\begin{cor}\label{corollaryexist}
	Under the hypotheses of Corollary \ref{exandun}, suppose $\nu_\Phi$ is absolutely continuous with respect to Lebesgue measure. If there exists a solution \linebreak $f \in C([0,+\infty),X)$, then this solution is unique.
\end{cor}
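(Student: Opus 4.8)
The plan is to reduce the uniqueness claim to the local uniqueness already guaranteed by Theorem \ref{ex}, together with a continuation argument. Suppose $f,g\in C([0,+\infty),X)$ are two global solutions of \eqref{lCP}. Both satisfy the integral equation $f(t)=f_0+\cI^\Phi_t(\xi+Ff(t))$, and likewise for $g$. Set $h(t)=f(t)-g(t)$; by linearity of $F$ and of $\cI^\Phi_t$ we obtain
\begin{equation*}
h(t)=\int_0^t u_\Phi(t-s)\,F h(s)\,ds,\qquad t\ge 0,
\end{equation*}
with $h(0)=0$. The aim is to show $h\equiv 0$ on every compact $[0,T]$.

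First I would fix $T>0$ and note that $h$ is continuous, hence bounded on $[0,T]$, say $\Norm{h}{0}=M_T<+\infty$. Taking norms and using $|Fh(s)|\le \Norm{F}{L(X,X)}|h(s)|$ together with Assumption \ref{assp} (in the form $u_\Phi(r)\le Cr^{\beta-1}$ near $0$, extended to $[0,T]$ as in Remark \ref{rmk:contU}), one gets
\begin{equation*}
|h(t)|\le C\Norm{F}{L(X,X)}\int_0^t (t-s)^{\beta-1}|h(s)|\,ds.
\end{equation*}
This is precisely a linear Gr\"onwall-type integral inequality with a weakly singular kernel $(t-s)^{\beta-1}$. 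The cleanest way to finish is to iterate the inequality: substituting the bound into itself $n$ times produces a kernel that is a convolution power of $r^{\beta-1}$, which by the Beta-function identity equals a constant multiple of $r^{n\beta-1}$ with constant $(C\Norm{F}{L(X,X)}\Gamma(\beta))^n/\Gamma(n\beta)$ times $M_T$. Since $\Gamma(n\beta)\to+\infty$ super-exponentially while the numerator grows only exponentially, the $n$-th iterate bound tends to $0$ uniformly on $[0,T]$, forcing $h(t)=0$ for all $t\in[0,T]$. As $T$ was arbitrary, $f=g$ on $[0,+\infty)$.

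The role of the extra hypothesis that $\nu_\Phi$ is absolutely continuous deserves a word: it is what makes $\partial_t^\Phi$ (and hence the whole reformulation through \eqref{invid}) behave well enough that the two notions of solution coincide and that $u_\Phi$ is genuinely a density on all of $(0,+\infty)$, so the convolution identities used above are legitimate; alternatively, one may already have such a global solution satisfy the integral equation directly from the earlier Lemma, in which case this hypothesis is used only to ensure $\cI^\Phi_t$ is well-defined along the solution. The main obstacle is not conceptual but bookkeeping: one must be careful that the constant $C$ from Remark \ref{rmk:contU} is uniform on $[0,T]$ (it is, since $u_\Phi$ is non-increasing and integrable near $0$ and bounded away from $0$ on $[t_0,T]$), and that the iterated-kernel computation is carried out with the correct Beta-function normalisation so that the factorial-type growth of $\Gamma(n\beta)$ really does dominate. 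Once those are in place the argument is the standard fractional Gr\"onwall / Picard–Lindel\"of uniqueness argument, specialised to the kernel $u_\Phi$.
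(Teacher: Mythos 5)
Your argument is correct, but it is genuinely different from the one in the paper. The paper proves this corollary by a continuation argument: it sets $T_*=\sup\{t>0:\ f_1(t)=f_2(t)\}$, assumes $T_*<+\infty$, splits the memory integral in $\partial_t^\Phi$ at $T_*$ --- this is precisely where the absolute continuity of $\nu_\Phi$ is used, to differentiate $\overline{\nu}_\Phi(t-s)$ under the integral and isolate the ``past'' contribution --- and then shifts time so that the difference $h(t)=g(T_*+t)$ solves a fresh Cauchy problem $\partial_t^\Phi h=Fh$, $h(0)=0$, to which the local uniqueness of Theorem \ref{ex} applies, yielding a contradiction. You instead pass to the integral equation \eqref{IE} on an arbitrary compact interval $[0,T]$ (legitimate by the equivalence lemma at the start of Section \ref{Sec3}, since $\xi+Ff(\cdot)$ is continuous, hence in $L^1$), subtract the two equations, and run the classical singular-kernel Gr\"onwall iteration; this is in substance the case $a\equiv 0$, $g\equiv\Norm{F}{L(X,X)}$ of Theorem \ref{thm:Gron} (compare Lemma \ref{PEBk}), which the paper only develops in Section \ref{Sec5}, although the two-line Beta-function computation you sketch makes your proof self-contained. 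Your route is shorter, avoids the contradiction/continuation step altogether, and gives uniqueness on every $[0,T]$ directly. One consequence worth flagging: your proof never uses the hypothesis that $\nu_\Phi$ is absolutely continuous, so under your approach that assumption is superfluous; your closing speculation about its role (well-posedness of $\cI_t^\Phi$, the density $u_\Phi$, etc.) is off the mark --- in the paper it serves only to justify writing $\der{}{t}\overline{\nu}_\Phi(t-s)=-\nu_\Phi(t-s)$ when splitting the derivative at $T_*$. Your uniformity claim for the constant in Remark \ref{rmk:contU} is also slightly misstated (what matters is that $u_\Phi$ is bounded \emph{above} on $[t_0,T]$ by monotonicity while $t^{\beta-1}$ is bounded below there), but the conclusion you need, $u_\Phi(t)\le C_T t^{\beta-1}$ on $(0,T]$, is correct.
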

\begin{proof}
	Let us suppose there are two solutions $f_1(t)$ and $f_2(t)$ of \eqref{lCP} defined on $[0,+\infty)$. By Corollary \ref{exandun} we already know there exists $T'>0$ such that $f_1(t)=f_2(t)$ for any $t \in [0,T']$ (just consider any $T'<T$). Let us then consider $T_*=\sup\{t>0: f_1(t)=f_2(t)\}$. We want to show that $T_*=+\infty$. Let us argue by contradiction: suppose that $T_*<+\infty$. Since $f_1$ and $f_2$ are continuous, we know that $f_1(T_*)=f_2(T_*)=f_*$. Let us consider $t>T_*$ and observe that
	\begin{align*}
	\partial_t^\Phi f_1(t)&=\der{}{t}\int_0^t \overline{\nu}_\Phi(t-s)(f_1(s)-f_0)ds\\
	&=\der{}{t}\left(\int_0^{T_*} \overline{\nu}_\Phi(t-s)(f_1(s)-f_0)ds+\int_{T_*}^t \overline{\nu}_\Phi(t-s)(f_1(s)-f_*)ds\right)\\&\qquad +(f_*-f_0)\overline{\nu}_\Phi(t-T_*).
	\end{align*}
	Let us denote by $\nu_\Phi(t)$ the density of the L\'evy measure $\nu_\Phi$. Then we have $\der{}{t}\bar{\nu}_\Phi(t-s)=-\nu_\Phi(t-s)$ and, since $f_1$ is continuous,
	\begin{equation*}
	\int_0^{T_*}\nu_\Phi(t-s)|f_1(s)-f_0|ds\le C\int_0^{T_*}\nu_\Phi(t-s)ds=C\nu_\Phi(t-T^*,t),
	\end{equation*}
	thus we have
	\begin{align*}
	\partial_t^\Phi f_1(t)&=-\int_0^{T_*} \nu_\Phi(t-s)(f_1(s)-f_0)ds+\der{}{t}\int_{T_*}^t \overline{\nu}_\Phi(t-s)(f_1(s)-f_*)ds\\&\qquad +(f_*-f_0)\overline{\nu}_\Phi(t-T_*).
	\end{align*}
	Recalling the definition of $f_1$, we have
	\begin{align*}
	\der{}{t}\int_{T_*}^t \overline{\nu}_\Phi(t-s)&(f_1(s)-f_*)ds=Ff_1(t)+\xi\\&+\int_0^{T_*} \nu_\Phi(t-s)(f_1(s)-f_0)ds -(f_*-f_0)\overline{\nu}_\Phi(t-T_*).
	\end{align*}
	The same relation holds for $f_2$.
	Now define $g=f_1-f_2$ and observe that
	\begin{equation*}
	\der{}{t}\int_{T_*}^t \overline{\nu}_\Phi(t-s)g(s)ds=Fg(t),
	\end{equation*}
	since $g \equiv 0$ on $[0,T_*]$. Now let us consider the change of variables $w=s-T_*$. To obtain
	\begin{equation*}
	\der{}{t}\int_{0}^{t-T_*} \overline{\nu}_\Phi(t-T_*-w)g(T_*+w)dw=Fg(T_*+t-T_*).
	\end{equation*}
	Defining the function $h(t)=g(T_*+t)$ for $t>0$ we finally have
	\begin{equation*}
	\partial_t^\Phi h(t)=Fh(t).
	\end{equation*}
	Recalling that $h(0)=0$, we have that $h$ satisfies the Cauchy problem
	\begin{equation*}
	\begin{cases}
	\partial_t^\Phi h(t)=Fh(t) & t>0 \\
	h(0)=0.
	\end{cases}
	\end{equation*}
	However, by Theorem \ref{ex}, we know that the previous problem admits a unique solution in $[0,T']$ and $h(t)\equiv 0$ is a solution. Hence in $[0,T']$ we have $h(t)\equiv 0$, which is absurd since this implies that $f_1(t)=f_2(t)$ for $t \in [T_*,T_*+T']$. Thus $T_*=+\infty$ and we conclude the proof.
\end{proof}
\section{Eigenvalue problems and $\Phi$-exponential functions}\label{Sec4}
In this section we will focus on the eigenvalue problem
\begin{equation}\label{reqeig}
\begin{cases}
\partial_t^\Phi f(t)=\lambda f(t) & t>0\\
f(0)=f_0,
\end{cases}
\end{equation}
where $\lambda \in \R$, $f:[0,+\infty) \to \R$ and $f_0 \in \R$. Let us stress that problem \eqref{reqeig} for $\lambda<0$ has been studied in \cite{kochubei2011general} when $\Phi$ is a complete Bernstein function and in \cite{meerschaert2019relaxation} when $\Phi$ is a special Bernstein function. On the other hand, the case $\lambda>0$ has been studied (up to our knowledge) only in \cite{kochubei2019growth} as $\Phi$ is a complete Bernstein function. Here we want to show that problem \eqref{reqeig} admits a solution as $\Phi$ is any Bernstein function with $a_\Phi=b_\Phi=0$ and $\nu_\Phi(0,+\infty)=+\infty$ and for any $\lambda \in \R$.\\ 
To do this, we need the following Lemma.
\begin{lem}
	Let $\Phi \in \BF$ be a Bernstein function such that $a_\Phi=b_\Phi=0$ and $\nu_\Phi(0,+\infty)=+\infty$. Then the function
	\begin{equation*}
	\fe_\Phi(t;\lambda)=\E[e^{\lambda L_\Phi(t)}]
	\end{equation*}
	is well defined for any $t>0$ and $\lambda \in \R$.
\end{lem}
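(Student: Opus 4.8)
The plan is to split the argument according to the sign of $\lambda$. The case $\lambda \le 0$ is immediate: since $L_\Phi(t) \ge 0$ almost surely, we have $e^{\lambda L_\Phi(t)} \le 1$, hence $\fe_\Phi(t;\lambda) \le 1 < +\infty$ for every $t > 0$. So I fix $\lambda > 0$ and $t > 0$ and concentrate on that case, where the point is to produce an exponentially decaying tail bound for the random variable $L_\Phi(t)$.

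First I would pass from $L_\Phi$ to the subordinator $\sigma_\Phi$. Unwinding the infimum in $L_\Phi(t) = \inf\{y > 0 : \sigma_\Phi(y) > t\}$ and using that $y \mapsto \sigma_\Phi(y)$ is nondecreasing, one obtains the inclusion $\{L_\Phi(t) > s\} \subseteq \{\sigma_\Phi(s) \le t\}$ for every $s > 0$. Then a Chernoff bound: for any $\theta > 0$, Markov's inequality applied to $e^{-\theta\sigma_\Phi(s)}$ together with the Laplace exponent identity \eqref{Lapexp} gives
\begin{equation*}
\bP(L_\Phi(t) > s) \;\le\; \bP(\sigma_\Phi(s) \le t) \;=\; \bP\bigl(e^{-\theta\sigma_\Phi(s)} \ge e^{-\theta t}\bigr) \;\le\; e^{\theta t}\,\E\bigl[e^{-\theta\sigma_\Phi(s)}\bigr] \;=\; e^{\theta t - s\Phi(\theta)}.
\end{equation*}

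Next I would use the standing hypotheses to make $\Phi(\theta)$ exceed $\lambda$. Since $a_\Phi = b_\Phi = 0$, the L\'evy--Khintchine representation \eqref{eq:LKrepr} reads $\Phi(\theta) = \int_0^{+\infty}(1 - e^{-\theta x})\,\nu_\Phi(dx)$; as $\theta \to +\infty$ the integrand increases to $1$ for each $x > 0$, so by monotone convergence $\Phi(\theta) \to \nu_\Phi(0,+\infty) = +\infty$. Fixing $\theta_\lambda$ with $\Phi(\theta_\lambda) > \lambda$ and using the layer-cake formula for the exponential moment,
\begin{align*}
\fe_\Phi(t;\lambda) &= 1 + \lambda\int_0^{+\infty} e^{\lambda s}\,\bP(L_\Phi(t) > s)\,ds \\
&\le 1 + \lambda e^{\theta_\lambda t}\int_0^{+\infty} e^{-(\Phi(\theta_\lambda)-\lambda)s}\,ds \;=\; 1 + \frac{\lambda e^{\theta_\lambda t}}{\Phi(\theta_\lambda)-\lambda} \;<\; +\infty,
\end{align*}
which finishes the proof.

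I do not expect a serious obstacle here: the only points needing care are the set inclusion $\{L_\Phi(t) > s\} \subseteq \{\sigma_\Phi(s) \le t\}$, which is purely a matter of reading the definition of $L_\Phi$, and the limit $\Phi(\theta) \to +\infty$, which is exactly where the assumptions $a_\Phi = b_\Phi = 0$ and $\nu_\Phi(0,+\infty) = +\infty$ are used; everything else is Chernoff plus Fubini. One could instead take the time-Laplace transform of $s \mapsto e^{\lambda s}$ against the density $f_\Phi(s;t)$ and invoke \eqref{Laptransdens} to get $\cL_{t \to z}[\fe_\Phi(t;\lambda)](z) = \Phi(z)/\bigl(z(\Phi(z)-\lambda)\bigr)$ for large $z$, but that route yields finiteness only for a.e.\ $t$ and then needs the monotonicity of $t \mapsto \fe_\Phi(t;\lambda)$, so the direct estimate above seems preferable.
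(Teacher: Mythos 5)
Your proof is correct and follows essentially the same route as the paper: the inclusion $\{L_\Phi(t)>s\}\subseteq\{\sigma_\Phi(s)\le t\}$, the Markov/Chernoff bound $\bP(L_\Phi(t)>s)\le e^{\theta t-s\Phi(\theta)}$, and the unboundedness of $\Phi$ (from $\nu_\Phi(0,+\infty)=+\infty$) to choose $\theta$ with $\Phi(\theta)>\lambda$. The only difference is cosmetic: you apply the layer-cake formula to $\lambda e^{\lambda s}\bP(L_\Phi(t)>s)$ directly, whereas the paper integrates $\bP(e^{\lambda L_\Phi(t)}>s)$ over $s\ge 1$; both yield the same finiteness conclusion.
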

\begin{proof}
	Let us observe that the statement is obvious as $\lambda \le 0$, so we only have to focus on the case $\lambda>0$. Let us observe that for any $x>0$ it holds
	\begin{equation*}
	\bP(L_\Phi(t)>s)\le \bP(\sigma_\Phi(s)\le t)=\bP(e^{-x\sigma_\Phi(s)}\ge e^{-xt}).
	\end{equation*}
	By Markov's inequality, recalling equation \eqref{Lapexp}, we achieve
	\begin{equation}\label{control}
	\bP(L_\Phi(t)>s)\le e^{xt-s\Phi(x)}.
	\end{equation}
	Now let us recall observe that, obviously, $e^{\lambda L_\Phi(t)}$ is a non-negative random variable, hence
	\begin{equation*}
	\E[e^{\lambda L_\Phi(t)}]=\int_0^{+\infty}\bP(e^{\lambda L_\Phi(t)}>s)ds=\int_0^{1}\bP(e^{\lambda L_\Phi(t)}>s)ds+\int_1^{+\infty}\bP(e^{\lambda L_\Phi(t)}>s)ds.
	\end{equation*}
	As $\int_0^{1}\bP(e^{\lambda L_\Phi(t)}>s)ds \le 1$, we only have to consider the second integral. We have, by equation \eqref{control}
	\begin{align*}
	\int_1^{+\infty}\bP(e^{\lambda L_\Phi(t)}>s)ds&=\int_1^{+\infty}\bP\left(L_\Phi(t)>\frac{\log(s)}{\lambda}\right)ds\\
	&\le e^{xt}\int_1^{+\infty}e^{-\frac{\Phi(x)}{\lambda}\log(s)}ds\\
	&=e^{xt}\int_1^{+\infty}s^{-\frac{\Phi(x)}{\lambda}}ds.
	\end{align*}
	However, $\Phi(x)$ is unbounded (being $\nu_\Phi(0,+\infty)=+\infty$), thus we can consider $x>0$ big enough to have $\frac{\Phi(x)}{\lambda}>1$, concluding the proof.
\end{proof}
\begin{rmk}\label{MLrmk}
	Let us recall that if $\Phi(\lambda)=\lambda^\alpha$ for $\alpha \in (0,1)$, it has been shown in \cite{bingham1971limit} that
	\begin{equation*}
	\fe_\Phi(t;\lambda)=E_\alpha(\lambda t^\alpha),
	\end{equation*}
	where $E_\alpha$ is the one-parameter Mittag-Leffler function
	\begin{equation*}
	E_\alpha(z)=\sum_{k=0}^{+\infty}\frac{z^k}{\Gamma(\alpha k+1)}, \ z \in \mathbb{C}.
	\end{equation*}
\end{rmk}
Arguing via Laplace transforms, we can show that the function $f_0\fe_\Phi(t;\lambda)$ is the solution of the eigenvalue problem \eqref{reqeig}.
\begin{prop}\label{prop:eigenfunc}
	Let $\Phi \in \BF$ be a Bernstein function with $a_\Phi=b_\Phi=0$ and $\nu_\Phi(0,+\infty)$. Then, for any $\lambda \in \R$, the function $f(t)=f_0\fe_\Phi(t;\lambda)$ is the unique solution of the Cauchy problem \eqref{reqeig}.
\end{prop}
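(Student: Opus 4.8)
The plan is to set $f(t)=f_0\fe_\Phi(t;\lambda)$ and to work throughout with Laplace transforms, the one genuinely new ingredient being the transform of $\fe_\Phi(\cdot;\lambda)$ itself. Writing $\fe_\Phi(t;\lambda)=\int_0^{+\infty}e^{\lambda s}f_\Phi(s;t)\,ds$, I would apply Tonelli's theorem (the integrand $e^{-zt}e^{\lambda s}f_\Phi(s;t)$ being nonnegative) together with \eqref{Laptransdens} to obtain, for real $z$ large enough,
\begin{equation*}
\cL_{t\to z}[\fe_\Phi(t;\lambda)](z)=\int_0^{+\infty}e^{\lambda s}\,\frac{\Phi(z)}{z}e^{-s\Phi(z)}\,ds=\frac{\Phi(z)}{z}\cdot\frac{1}{\Phi(z)-\lambda},
\end{equation*}
the last integral converging precisely because $\Phi$ is unbounded (since $\nu_\Phi(0,+\infty)=+\infty$, so $\Phi(z)>\lambda$ for $z$ large); in particular this shows $\fe_\Phi(\cdot;\lambda)$ is Laplace transformable, and one records that it is continuous (dominated convergence, using the a.s.\ continuity of the paths of $L_\Phi$) with $\fe_\Phi(0;\lambda)=\E[e^{\lambda L_\Phi(0)}]=1$, whence $f(0)=f_0$. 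Thus $\cL_{t\to z}[f(t)](z)=\dfrac{f_0\Phi(z)}{z(\Phi(z)-\lambda)}$ on a right half-line.

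For existence I would transform the convolution $\overline{\nu}_\Phi\ast(f-f_0)$ appearing in Definition \ref{def:genCap}, using the convolution theorem and \eqref{laptranstail}:
\begin{equation*}
\cL_{t\to z}\big[\overline{\nu}_\Phi\ast(f-f_0)\big](z)=\frac{\Phi(z)}{z}\Big(\cL_{t\to z}[f(t)](z)-\frac{f_0}{z}\Big)=\frac{f_0\Phi(z)}{z^2}\Big(\frac{\Phi(z)}{\Phi(z)-\lambda}-1\Big)=\frac{\lambda}{z}\,\cL_{t\to z}[f(t)](z),
\end{equation*}
and the right-hand side equals $\cL_{t\to z}\big[\lambda\int_0^{t}f(s)\,ds\big](z)$. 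By injectivity of the Laplace transform on $L^1_{\mathrm{loc}}$ — and since $f$, hence both sides, is continuous — one gets $\overline{\nu}_\Phi\ast(f-f_0)(t)=\lambda\int_0^{t}f(s)\,ds$ for all $t\ge 0$; differentiating the (absolutely continuous) right-hand side and using Definition \ref{def:genCap} yields $\partial_t^\Phi f(t)=\lambda f(t)$. Together with $f(0)=f_0$ this shows $f$ solves \eqref{reqeig}.

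For uniqueness let $g$ be any solution. As observed right after Definition \ref{def:genCap}, $\overline{\nu}_\Phi\ast(g-f_0)$ is absolutely continuous; its derivative is $\partial_t^\Phi g=\lambda g$ and it vanishes at $t=0$, so $\overline{\nu}_\Phi\ast(g-f_0)(t)=\lambda\int_0^{t}g(s)\,ds$. Subtracting the identity already obtained for $f$, the difference $h=f-g\in L^1_{\mathrm{loc}}$ satisfies $k\ast h\equiv 0$ with $k(t):=\overline{\nu}_\Phi(t)-\lambda$. Since $\overline{\nu}_\Phi(0^+)=\nu_\Phi(0,+\infty)=+\infty$, the function $k$ is not a.e.\ zero on any neighbourhood of $0$, so Titchmarsh's convolution theorem forces $h\equiv 0$, i.e.\ $g=f$. (If one restricts attention to Laplace transformable solutions, \eqref{LaptransCap} applied to $g$ gives $(\Phi(z)-\lambda)\cL_{t\to z}[g](z)=\tfrac{\Phi(z)}{z}f_0$, hence $\cL_{t\to z}[g]=\cL_{t\to z}[f]$ and $g=f$ by injectivity.)

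The main obstacle is the first step: one must simultaneously guarantee that $\fe_\Phi(\cdot;\lambda)$ has finite abscissa of convergence and that the transform variable can be taken large enough for $\Phi(z)>\lambda$ — both delivered by the unboundedness of $\Phi$ under the hypothesis $\nu_\Phi(0,+\infty)=+\infty$ — and then legitimately interchange the $ds$-integral with the Laplace transform. Everything afterwards is algebraic manipulation of rational expressions in $\Phi(z)$ and $z$ together with injectivity of $\cL$ (or Titchmarsh's theorem).
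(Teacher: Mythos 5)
Your proposal is correct and follows essentially the same route as the paper: compute $\cL_{t\to z}[\fe_\Phi(t;\lambda)](z)=\frac{\Phi(z)}{z(\Phi(z)-\lambda)}$ from \eqref{Laptransdens}, rearrange into the convolution identity $\overline{\nu}_\Phi\ast(f-f_0)=\lambda\int_0^{\cdot}f(s)\,ds$ via \eqref{laptranstail} and injectivity of the Laplace transform, and differentiate. The only place you depart from the paper is uniqueness, where the paper simply invokes injectivity of the Laplace transform (tacitly assuming the competing solution is Laplace transformable), while your Titchmarsh argument with $k=\overline{\nu}_\Phi-\lambda$ (nonvanishing near $0$ since $\overline{\nu}_\Phi(0^+)=+\infty$) removes that tacit assumption and is, if anything, more complete.
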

\begin{proof}
Let us first show that $f(t)=f_0\fe_\Phi(t;\lambda)$ is a solution of the Cauchy Problem \eqref{reqeig}. Let us observe that $\fe_\Phi(t;0)=1$, thus, if $\lambda=0$, $f(t)=f_0$ is a solution of the aforementioned problem.\\
Now let us consider $\lambda\not =0$. Since $\fe_\Phi(0;\lambda)=1$, we have, by definition, $f(0)=f_0$. Using Equation \eqref{Laptransdens} and denoting $\bar{f}(z)=\cL_{t \to z}[f(t)](z)$, we have, by definition,
\begin{align*}
\bar{f}(z)&=f_0\cL_{t \to z}\left[\int_0^{+\infty}e^{\lambda s}f_L(s;t)ds\right](z)\\
&=f_0\frac{\Phi(z)}{z(\Phi(z)-\lambda)}
\end{align*}
for any real $z$ such that $\Phi(z)>\lambda$. In particular we obtain that ${\rm abs}(f)=\Phi^{-1}(\lambda)$. After some simple algebraic manipulation we have
\begin{equation}\label{Laptranseq}
\frac{\Phi(z)}{z}\left(\bar{f}(z)-\frac{f_0}{z}\right)=\frac{\lambda}{z} \bar{f}(z).
\end{equation}
Let us recall that $L_\Phi(t)$ is non-negative and increasing, thus also $\fe_\Phi(t;\lambda)$ for any $\lambda>0$, while it is controlled by $1$ if $\lambda<0$. Hence $f(t)$ belongs to $L^\infty_{\rm loc}(0,+\infty)\subset L^1_{\rm loc}(0,+\infty)$. This means that the function $F(t)=\int_0^tf(s)ds$ is well defined and absolutely continuous. Being $\Phi^{-1}(\lambda)>0$, by \cite[Corollary $1.6.5$]{arendt2001vector}, we have ${\rm abs}(F)\le \Phi^{-1}(\lambda)$ and $\cL_{t \to z}[F(t)](z)=\frac{\bar{f}(z)}{z}$ as $z>\Phi^{-1}(\lambda)$. On the other hand, $\cL_{t \to z}[f(t)-f_0]=\bar{f}(z)-\frac{f_0}{z}$ as $z>\Phi^{-1}(\lambda)$, thus, by Equation \eqref{laptranstail} and \cite[Proposition $1.6.4$]{arendt2001vector}, we have, starting from Equation \eqref{Laptranseq},
\begin{equation*}
\cL_{t \to z}[(\bar{\nu}_\Phi(\cdot)\ast (f(\cdot)-f_0))(t)](z)=\cL_{t \to z}[\lambda F(t)](z).
\end{equation*}
By injectivity of the Laplace transform we finally have
\begin{equation*}
\int_0^t\bar{\nu}_\Phi(t-s)(f(s)-f_0)ds=\lambda \int_0^tf(s)ds.
\end{equation*}
Now let us observe that the right-hand side of the previous equation is absolutely continuous (since $f \in L^1_{\rm loc}(0,+\infty)$), thus we can differentiate both sides for almost any $t>0$, leading to the first equation of \eqref{reqeig}. Finally, uniqueness follows from injectivity of the Laplace transform.
\end{proof}
Since the function $\fe_\Phi(t;\lambda)$ \textit{plays the role of the exponential} with respect to $\partial_t^\Phi$, we call them $\Phi$-exponential functions.\\
Now let us focus again on the case $\Phi \in \SBF$ satisfying Assumption \ref{assp}. In this case, let us define the following sequence of functions on $(0,+\infty)$:
\begin{equation*}
\begin{cases}
u_0^*(t)=1;\\
u_1^*(t)=U_\Phi(t);\\
u_{k+1}^*(t)=\int_0^t u_\Phi(t-s)u_k^*(s)ds & k \ge 1.
\end{cases}
\end{equation*}
We want to express the $\Phi$-exponential function in terms of series of functions by using the sequence $(u_k^*(t))_{k \ge 1}$. To do this, let us first show the following technical Lemma
\begin{lem}\label{lem:convsum}
	Let $\Phi \in \SBF$ satisfying Assumption \ref{assp}. Then for any fixed $\lambda>0$ the series
	\begin{equation*}
	\sum_{k=0}^{+\infty}\lambda^ku_k^*(t)
	\end{equation*}
	is normally convergent in $[0,T]$ for any $T>0$.
\end{lem}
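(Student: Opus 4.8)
The plan is to bound each term $u_k^*(t)$, uniformly on $[0,T]$, by the general term of a Mittag--Leffler-type series, and then to conclude via the Weierstrass $M$-test.

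The first step is to upgrade the local bound in Assumption \ref{assp} to a bound valid on all of $(0,T]$. Assumption \ref{assp} only gives $u_\Phi(t)\le Ct^{\beta-1}$ for $t\in(0,t_0)$, but $u_\Phi$ is non-increasing, so for $t\in[t_0,T]$ one has $u_\Phi(t)\le u_\Phi(t_0)\le u_\Phi(t_0)\,T^{1-\beta}\,t^{\beta-1}$, using that $t^{\beta-1}\ge T^{\beta-1}$ there. Enlarging the constant, I obtain a single $\mathfrak{C}>0$ with $u_\Phi(t)\le\mathfrak{C}\,t^{\beta-1}$ on $(0,T]$; increasing $\mathfrak{C}$ once more and invoking Remark \ref{rmk:contU} (together with $\Gamma(\beta+1)=\beta\Gamma(\beta)$), I may also assume $u_1^*(t)=U_\Phi(t)\le \mathfrak{C}\,\Gamma(\beta)\,t^{\beta}/\Gamma(\beta+1)$ on $[0,T]$.

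The core of the argument is the inductive estimate
\[
u_k^*(t)\le \frac{\bigl(\mathfrak{C}\,\Gamma(\beta)\bigr)^k}{\Gamma(k\beta+1)}\,t^{k\beta},\qquad t\in[0,T],\ k\ge 1,
\]
whose base case $k=1$ is the bound on $U_\Phi$ just recorded. For the inductive step I would insert the hypothesis for $k$ into the recursion $u_{k+1}^*(t)=\int_0^t u_\Phi(t-s)\,u_k^*(s)\,ds$, use $u_\Phi(t-s)\le\mathfrak{C}(t-s)^{\beta-1}$, and evaluate the Euler Beta integral
\[
\int_0^t (t-s)^{\beta-1}s^{k\beta}\,ds=B(\beta,k\beta+1)\,t^{(k+1)\beta}=\frac{\Gamma(\beta)\,\Gamma(k\beta+1)}{\Gamma((k+1)\beta+1)}\,t^{(k+1)\beta};
\]
the product of constants then telescopes exactly into the claimed form at level $k+1$. (The integrand is integrable since $\beta-1>-1$, and $u_k^*$ is continuous by an easy induction, so the convolutions are well defined.)

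Finally, for a fixed $\lambda>0$ and $t\in[0,T]$ the estimate yields $0\le\lambda^k u_k^*(t)\le\bigl(\lambda\,\mathfrak{C}\,\Gamma(\beta)\,T^{\beta}\bigr)^k/\Gamma(k\beta+1)$ for $k\ge1$, while the $k=0$ term is the constant $1$; since $\sum_{k\ge0}z^k/\Gamma(k\beta+1)=E_\beta(z)$ defines an entire function, the numerical series obtained with $z=\lambda\,\mathfrak{C}\,\Gamma(\beta)\,T^{\beta}$ converges and dominates $\sum_k\lambda^k u_k^*(t)$ uniformly on $[0,T]$, which is exactly the asserted normal convergence. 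I expect the only genuinely non-routine point to be the first step --- converting the near-origin bound of Assumption \ref{assp} into a bound on the whole of $[0,T]$, handled above by monotonicity of $u_\Phi$ --- with everything else being the standard Mittag--Leffler bookkeeping together with the Beta-function identity.
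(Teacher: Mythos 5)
Your proof is correct and follows essentially the same route as the paper: the same inductive bound $u_k^*(t)\le \mathrm{const}\cdot(\Gamma(\beta)t^\beta)^k/\Gamma(k\beta+1)$ established via the Euler Beta integral, followed by domination by a Mittag--Leffler series. The one place you are more careful than the paper is in explicitly extending the near-origin bound $u_\Phi(t)\le Ct^{\beta-1}$ from $(0,t_0)$ to all of $(0,T]$ via the monotonicity of $u_\Phi$, a step the paper's proof takes for granted.
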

\begin{proof}
	Let us fix $T>0$ and recall, by Remark \ref{rmk:contU}, that there exists some constant $C_1>0$ (depending on $T$) such that $U(t)\le C_1t^{\beta}$ for any $t \in [0,T]$. On the other hand, by Assumption \ref{assp}, we have $u(t)\le C_2t^{\beta-1}$ for any $t \in [0,T]$ for some constant $C_2>0$ depending on $T$. We want to show that, for $k \ge 1$,
	\begin{equation}\label{indassp1}
	u_k^*(t)\le C_1C^{k-1}_2\frac{\beta}{\Gamma(k\beta+1)}(\Gamma(\beta)t^\beta)^{k}.
	\end{equation}
	This obviously holds as $k=1$. Let us suppose inequality \eqref{indassp1} holds for some $k \ge 1$. Then we have
	\begin{align*}
	u_{k+1}^*(t)&\le C_1C^{k}_2\frac{\beta\Gamma(\beta)^k}{\Gamma(k\beta+1)}\int_0^{t}(t-s)^{\beta-1}s^{k\beta}ds\\
	&=C_1C^{k}_2\frac{\beta\Gamma(\beta)^k}{\Gamma(k\beta+1)}t^{(k+1)\beta}\int_0^{1}(1-w)^{\beta-1}w^{k\beta}dw\\
	&=C_1C^{k}_2\frac{\beta\Gamma(\beta)^{k+1}}{\Gamma((k+1)\beta+1)}t^{(k+1)\beta},
	\end{align*}
	where we used the change of variables $w=\frac{s}{t}$. Hence we have
	\begin{equation*}
	\sum_{k=0}^{+\infty}\lambda^ku_k^*(t)\le 1+\frac{C_1\beta}{C_2}\sum_{k=1}^{+\infty}\frac{(\lambda C_2\Gamma(\beta) t^\beta)^k}{\Gamma(\beta k+1)}=1+\frac{C_1\beta}{C_2}(E_\beta(\lambda C_2 \Gamma(\beta)t^\beta)-1),
	\end{equation*}
	where $E_\beta$ is the one-parameter Mittag-Leffler function defined in Remark \ref{MLrmk}.
\end{proof}
Then we are ready to show the following Theorem.
\begin{thm}\label{thm:series}
	Let $\Phi \in \SBF$ satisfying Assumption \ref{assp}. Then for any $\lambda \in \R$ it holds
	\begin{equation*}
	\fe_\Phi(t;\lambda)=\sum_{k=0}^{+\infty}\lambda^ku_k^*(t).
	\end{equation*}
\end{thm}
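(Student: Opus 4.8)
The plan is to recognize $g_\lambda(t):=\sum_{k=0}^{+\infty}\lambda^k u_k^*(t)$ as the solution of the integral equation associated to the eigenvalue problem \eqref{reqeig} with $f_0=1$, and then to conclude by uniqueness. The case $\lambda=0$ is trivial, since $g_0(t)=u_0^*(t)=1=\fe_\Phi(t;0)$, so fix $\lambda\neq 0$. First I would check that $g_\lambda\in C([0,+\infty))$: for $\lambda>0$ this is Lemma \ref{lem:convsum}, and for $\lambda<0$ it suffices to note that every $u_k^*$ is non-negative, so $\sum_k|\lambda^k u_k^*(t)|=\sum_k|\lambda|^ku_k^*(t)$ is normally convergent on each $[0,T]$ by the same Lemma; in either case $g_\lambda$ is a uniform limit of continuous functions on every $[0,T]$.

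The algebraic heart of the argument is the identity $u_{k+1}^*=\cI^\Phi_t u_k^*$ for every $k\ge 0$: for $k\ge 1$ it is the definition of the sequence, while for $k=0$
\begin{equation*}
\cI^\Phi_t u_0^*(t)=\int_0^t u_\Phi(t-s)\,ds=\int_0^t u_\Phi(\sigma)\,d\sigma=U_\Phi(t)=u_1^*(t).
\end{equation*}
Applying $\cI^\Phi_t$ to $g_\lambda$ and exchanging the operator with the series — which is legitimate because the partial sums converge uniformly on $[0,T]$ and $u_\Phi\in L^1_{\rm loc}([0,+\infty))$, so dominated convergence governs the integral $\int_0^t u_\Phi(t-s)(\cdot)\,ds$ — one obtains
\begin{equation*}
\cI^\Phi_t g_\lambda(t)=\sum_{k=0}^{+\infty}\lambda^k u_{k+1}^*(t)=\frac{1}{\lambda}\sum_{k=1}^{+\infty}\lambda^k u_k^*(t)=\frac{g_\lambda(t)-1}{\lambda},
\end{equation*}
that is, $g_\lambda(t)=1+\cI^\Phi_t(\lambda g_\lambda(t))$.

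This last relation is exactly the integral equation \eqref{IE} for problem \eqref{reqeig} with $F(t,x)=\lambda x$ and initial datum $1$; since $g_\lambda$ is continuous on each $[0,T]$ we have $\lambda g_\lambda\in L^1([0,T],\R)$ for every $T>0$, so the Lemma giving the equivalence between the Cauchy problem \eqref{CP} and the integral equation \eqref{IE} shows that $g_\lambda$ solves \eqref{reqeig} with $f_0=1$, the initial condition $g_\lambda(0)=1$ being read directly off the integral equation. Since Assumption \ref{assp} entails $a_\Phi=b_\Phi=0$ and $\nu_\Phi(0,+\infty)=+\infty$, Proposition \ref{prop:eigenfunc} applies and guarantees that such a solution is unique and equal to $\fe_\Phi(t;\lambda)$; hence $g_\lambda=\fe_\Phi(\cdot;\lambda)$. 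Alternatively, one may Laplace-transform term by term: using $\cL_{t\to z}[u_\Phi](z)=1/\Phi(z)$ one gets $\cL_{t\to z}[u_k^*](z)=(z\Phi(z)^k)^{-1}$, and summing the geometric series for $z$ with $\Phi(z)>|\lambda|$ recovers $\frac{\Phi(z)}{z(\Phi(z)-\lambda)}$, which is the Laplace transform of $\fe_\Phi(\cdot;\lambda)$ computed in the proof of Proposition \ref{prop:eigenfunc}; injectivity of the Laplace transform then finishes the proof.

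The only step that genuinely demands care is the interchange of the infinite series with $\cI^\Phi_t$ (respectively with the Laplace transform): it is routine but relies on the normal convergence provided by Lemma \ref{lem:convsum} together with the local integrability of the potential density $u_\Phi$. One should also check that $g_\lambda$ meets the mild conditions implicitly needed to invoke Proposition \ref{prop:eigenfunc} (continuity, and the growth bound from Lemma \ref{lem:convsum} ensuring Laplace transformability), but these are immediate from what was already established.
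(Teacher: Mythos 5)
Your primary route is genuinely different from the paper's. The paper works entirely on the Laplace side: it proves by induction that $\cL_{t\to z}[u_k^*(t)](z)=\frac{1}{z\Phi^k(z)}$, interchanges sum and transform by monotone (resp.\ dominated) convergence, matches the result with $\cL_{t\to z}[\fe_\Phi(\cdot;\lambda)](z)=\frac{\Phi(z)}{z(\Phi(z)-\lambda)}$ from Proposition \ref{prop:eigenfunc}, and concludes by injectivity --- this is exactly the ``alternative'' you sketch in your last display, so that part of your proposal coincides with the paper. Your main argument instead stays in the time domain: the identity $u_{k+1}^*=\cI^\Phi_t u_k^*$ plus normal convergence gives $g_\lambda=1+\cI^\Phi_t(\lambda g_\lambda)$, and you conclude by the uniqueness asserted in Proposition \ref{prop:eigenfunc}. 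That is an attractive, more structural argument (it explains \emph{why} the series is the eigenfunction), and the interchange of $\cI^\Phi_t$ with the uniformly convergent partial sums is justified as you say.

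The one soft spot is the final appeal to uniqueness. The uniqueness in Proposition \ref{prop:eigenfunc} is obtained via injectivity of the Laplace transform, so it only discriminates among Laplace-transformable solutions, and you must therefore verify that $g_\lambda$ is Laplace transformable. You claim this is ``immediate'' from the growth bound of Lemma \ref{lem:convsum}, but that bound is weaker than you suggest: Assumption \ref{assp} controls $u_\Phi(t)\le Ct^{\beta-1}$ only on $(0,t_0)$, so the constants $C_1,C_2$ in Lemma \ref{lem:convsum} depend on $T$ (in general $C_2\sim T^{1-\beta}$, e.g.\ when $u_\Phi$ tends to a positive constant, as for tempered subordinators), and the resulting bound $E_\beta(\lambda C_2(T)\Gamma(\beta)T^\beta)\sim\exp\bigl(c\,T^{1/\beta}\bigr)$ is superexponential, hence does not by itself give ${\rm abs}(g_\lambda)<+\infty$. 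The cleanest repair is to note that for $\lambda>0$ monotone convergence yields directly $\int_0^{+\infty}e^{-zt}g_\lambda(t)\,dt=\sum_{k\ge0}\frac{\lambda^k}{z\Phi^k(z)}<+\infty$ whenever $\Phi(z)>\lambda$ (and $g_{\lambda}\le g_{|\lambda|}$ for $\lambda<0$), which establishes transformability a posteriori --- but at that point you have essentially reproduced the paper's Laplace-transform computation, so the fallback route you describe is the one that closes the argument without further work.
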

\begin{proof}
	For $\lambda=0$ we have $\fe_\Phi(t,0)\equiv 1$, thus let us consider $\lambda \not = 0$. Let us recall that, in the proof of Proposition \ref{prop:eigenfunc}, we have shown that for $z>\Phi^{-1}(\lambda)$ it holds
	\begin{equation*}
	\cL_{t \to z}[\fe_\Phi(t;\lambda)]=\frac{1}{z\left(1-\frac{\lambda}{\Phi(z)}\right)}=\sum_{k=0}^{+\infty}\frac{\lambda^k}{z\Phi^k(z)}.
	\end{equation*}
	Now let us show that for any $k \ge 0$ it holds 
	\begin{equation}\label{formLapt}
	\cL_{t \to z}[u_k^*(t)]=\frac{1}{z\Phi^k(z)}
	\end{equation}
	This is obviously true for $k=0$. Thus, let us suppose that Equation \eqref{formLapt} holds for some $k \ge 0$. Let us recall, from \cite{veillette2010using}, that 
	\begin{equation*}
	\cL_{t \to z}[U_\Phi(t)](z)=\frac{1}{z\Phi(z)} \  \mbox{ and } \  \cL_{t \to z}[u_\Phi(t)](z)=\frac{1}{\Phi(z)}
	\end{equation*}
	as $z>0$. By \cite[Proposition $1.6.4$]{arendt2001vector} we have
	\begin{equation*}
	\cL_{t \to z}[u_{k+1}^*(t)](z)=\cL_{t \to z}[u_{k}^*(t)](z)\cL_{t \to z}[u_\Phi(t)](z)=\frac{1}{z\Phi^{k+1}(z)}.
	\end{equation*}
	Now let us consider $\Phi(z)>\max\{\lambda,0\}$ and observe that, by monotone convergence theorem if $\lambda>0$ and dominated convergence theorem if $\lambda<0$, it holds
	\begin{equation*}
	\cL_{t \to z}\left[\sum_{k=0}^{+\infty}\lambda^k u_k^*(t)\right](z)=\sum_{k=0}^{+\infty}\frac{\lambda^k}{z\Phi^k(z)},
	\end{equation*}
	concluding the proof by injectivity of the Laplace transform.
\end{proof}
\begin{rmk}
	By comparing the Laplace transform, we have
	\begin{equation*}
	u_k^*(t)=\frac{\E[L_\Phi^k(t)]}{k!}, \ t \ge 0, \ k \ge 0.
	\end{equation*}
\end{rmk}
Now that we have some properties concerning the eigenfunctions of the generalized Caputo derivative, we could ask if they play the role of the exponential in a generalization of Gr\"onwall inequality. Thus, we now close this \textit{linear parenthesis} and move forward to show a generalization of the Gr\"onwall inequality, which will be then used to determine some properties of the non-local Cauchy problem \eqref{Cprob}, such as continuity with respect to initial data and other parameters.
\section{The generalized Gr\"onwall inequality}\label{Sec5}
As we stated before, now we focus on a Gr\"onwall-type inequality for the generalized fractional integral operator given in Definition \ref{def:genint}. In particular, we will follow the lines of \cite[Theorem $1$]{ye2007generalized}. Let us first state the Theorem.
\begin{thm}\label{thm:Gron}
	Let $x,a,g \in L^1(0,T)$ with $a,g \ge 0$ almost everywhere and $g$ non-decreasing. Let $\Phi \in \SBF$ satisfying Assumption \ref{assp}. Moreover, suppose that
	\begin{equation}\label{intineq}
	x(t)\le a(t)+g(t)\cI_t^\Phi x(t), \ t \in [0,T].
	\end{equation}
	Then:
	\begin{itemize}
		\item It holds
		\begin{equation*}
		x(t)\le \sum_{k=0}^{+\infty}B^ka(t)
		\end{equation*}
		where $B^0$ is the identity operator and $B$ is defined as
		\begin{equation*}
		Ba(t)=g(t)\int_0^tu(t-s)a(s)ds;
		\end{equation*}
		\item There exists $C>0$ such that
		\begin{equation}\label{est5}
		x(t)\le a(t)+C\Gamma(\beta+1)g(t)\int_0^t E'_{\beta}(C\Gamma(\beta)g(t)(t-s))(t-s)^{\beta-1}a(s)ds,
		\end{equation}
		where $E_{\beta}(t)$ is the Mittag-Leffler function defined in Remark \ref{MLrmk}.
		\item If $a$ is non-decreasing then
		\begin{equation*}
		x(t)\le a(t)\fe_\Phi(t,g(T)).
		\end{equation*}
\end{itemize}
\end{thm}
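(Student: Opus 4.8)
The plan is to derive all three conclusions from a single iteration of the integral inequality \eqref{intineq}. Write $Bx(t)=g(t)\cI_t^\Phi x(t)=g(t)\int_0^t u(t-s)x(s)\,ds$, so that \eqref{intineq} reads $x\le a+Bx$. Since $g,u\ge0$, the linear operator $B$ is monotone ($\phi\le\psi$ a.e. $\Rightarrow B\phi\le B\psi$) and satisfies $|B\phi|\le B|\phi|$. Substituting $x\le a+Bx$ into itself and applying $B$ repeatedly (legitimate by monotonicity) gives, for every $n$,
$$x(t)\le\sum_{k=0}^{n-1}B^k a(t)+B^n x(t),\qquad\text{a.e. }t\in[0,T].$$
The heart of the matter is to estimate the iterates $B^k$ and to show $B^n x(t)\to0$.

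For the key estimate, recall from the proof of Lemma \ref{lem:convsum} that Assumption \ref{assp} provides a constant $C>0$ (depending on $T$) with $u(t)\le C t^{\beta-1}$ on $[0,T]$. Using that $g$ is non-decreasing, so $g(s)\le g(t)$ for $s\le t$, together with the Beta-integral identity $\int_0^t(t-s)^{a-1}s^{b-1}\,ds=\frac{\Gamma(a)\Gamma(b)}{\Gamma(a+b)}t^{a+b-1}$, an induction on $k$ shows that for every non-negative $\phi\in L^1(0,T)$,
$$B^k\phi(t)\le\frac{\big(C\Gamma(\beta)g(t)\big)^k}{\Gamma(k\beta)}\int_0^t(t-s)^{k\beta-1}\phi(s)\,ds .$$
Applying this with $\phi=|x|$: for $k$ so large that $k\beta>1$ we bound $(t-s)^{k\beta-1}\le t^{k\beta-1}$, hence $B^k|x|(t)\le t^{-1}\big(C\Gamma(\beta)g(t)t^\beta\big)^k\Norm{x}{L^1(0,T)}/\Gamma(k\beta)$. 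For a.e. fixed $t$ one has $g(t)<\infty$, and since $z^k/\Gamma(k\beta)\to0$ for every fixed $z$ (the series $\sum_k z^k/\Gamma(k\beta)$ converges), this forces $B^n x(t)\le B^n|x|(t)\to0$ and shows that $\sum_k B^k a(t)$ converges for a.e. $t$. Letting $n\to\infty$ in the displayed iteration gives the first conclusion $x(t)\le\sum_{k=0}^{\infty}B^k a(t)$.

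For the second conclusion I would insert the above bound on $B^k a$ into $x\le\sum_k B^k a$, interchange sum and integral by Tonelli, and identify the resulting kernel as a Mittag-Leffler series: using $E_\beta'(z)=\frac1\beta\sum_{j\ge0}z^j/\Gamma(\beta j+\beta)$ and $\beta\Gamma(\beta)=\Gamma(\beta+1)$,
$$\sum_{k=1}^{\infty}\frac{\big(C\Gamma(\beta)g(t)\big)^k}{\Gamma(k\beta)}(t-s)^{k\beta-1}=C\Gamma(\beta+1)g(t)(t-s)^{\beta-1}E_\beta'\!\big(C\Gamma(\beta)g(t)(t-s)^{\beta}\big),$$
which is precisely the kernel appearing in estimate \eqref{est5}. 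For the third conclusion, assume $a$ non-decreasing; then $a(s)\le a(t)$ for $s\le t$ and $g(s)\le g(T)$, so pulling these bounds through the $k$-fold iterated integral defining $B^k a(t)$ leaves exactly the $k$-fold iterated convolution of $u$ with the constant $1$, which by the recursion defining $u_k^*$ equals $u_k^*(t)$; hence $B^k a(t)\le a(t)g(T)^k u_k^*(t)$. Summing and invoking Lemma \ref{lem:convsum} and Theorem \ref{thm:series} yields $x(t)\le a(t)\sum_{k\ge0}g(T)^k u_k^*(t)=a(t)\fe_\Phi(t;g(T))$.

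The main obstacle is the convergence bookkeeping in the iteration: since $x$ is not assumed non-negative one must work with $|B^n x|\le B^n|x|$, and since $g$ is only in $L^1$ and non-decreasing (hence possibly unbounded near $T$) every estimate is valid only for a.e. $t$ with $g(t)<\infty$; the vanishing $B^n x(t)\to0$ then rests entirely on the super-geometric growth of $\Gamma(k\beta)$. The remaining work — the Beta-function induction for the bound on $B^k$, the Tonelli interchange, and the telescoping of iterated convolutions onto the sequence $u_k^*$ — is routine.
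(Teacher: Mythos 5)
Your proof follows essentially the same route as the paper: the same iteration $x\le\sum_{k=0}^{n-1}B^k a+B^n x$, the same inductive Beta-function bound $B^k\phi(t)\le\frac{(C\Gamma(\beta)g(t))^k}{\Gamma(k\beta)}\int_0^t(t-s)^{k\beta-1}\phi(s)\,ds$ (the paper's Lemma \ref{PEBk}), the same Mittag-Leffler resummation for the second item, and the same reduction $B^k a\le a\,g(T)^k u_k^*$ combined with Theorem \ref{thm:series} for the third. The only discrepancy is that your kernel identity correctly carries the argument $C\Gamma(\beta)g(t)(t-s)^{\beta}$ inside $E_\beta'$, whereas \eqref{est5} writes $(t-s)$ there; this appears to be a typo in the paper's statement rather than a gap in your argument.
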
 
Before proving the Theorem, let us give some technical Lemmas concerning the operator $B$. 
\subsection{The auxiliary operator $B$}
In this section we consider $\Phi \in \SBF$ satisfying Assumption \ref{assp}. For any function $f \in L^1([0,T])$ let us define, as stated in Theorem \ref{thm:Gron}, the following operator
\begin{equation*}
Bf(t)=g(t)\int_0^tu_\Phi(t-s)f(s)ds.
\end{equation*}
In this subsection we will prove some properties of the operator $B$ and of its powers. First of all, let us observe that, being $g$ and $u_\Phi$ non-negative, then for any $f_1,f_2 \in L^1([0,T])$ with $f_1(t)\le f_2(t)$ almost everywhere in $[0,T]$, it holds $Bf_1(t)\le Bf_2(t)$ for any $t \in [0,T]$. This simple property leads to an upper bound on $B^kf(t)$ as $f \in L^1(0,T)$.
\begin{lem}\label{PEBk}
Then there exists a constant $C$ such that for any $k \ge 1$ and any non-negative $f \in L^1([0,T])$ it holds
\begin{equation}\label{Bkcontrol}
B^kf(t)\le \frac{(C\Gamma(\beta)g(t))^k}{\Gamma(k\beta)}\int_0^t(t-s)^{k\beta-1}f(s)ds.
\end{equation}
\end{lem}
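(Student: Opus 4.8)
The plan is to prove the bound \eqref{Bkcontrol} by induction on $k$, using Assumption \ref{assp} to control $u_\Phi$ by a power function and the Beta-integral identity to propagate the power of $(t-s)$ through each application of $B$. First I would fix $T>0$ and invoke Assumption \ref{assp} together with Remark \ref{rmk:contU} to obtain a constant $C>0$ (depending on $T$) such that $u_\Phi(\tau)\le C\tau^{\beta-1}$ for $\tau\in[0,T]$; note also that $g$ is bounded on $[0,T]$ since $g\in L^1(0,T)$ is non-decreasing, so $g(t)$ appearing on the right-hand side is legitimate. For the base case $k=1$, from the definition of $B$ and the monotonicity remark just above the Lemma, $Bf(t)=g(t)\int_0^t u_\Phi(t-s)f(s)\,ds\le C g(t)\int_0^t(t-s)^{\beta-1}f(s)\,ds$, which is exactly \eqref{Bkcontrol} with $k=1$ because $\Gamma(\beta)/\Gamma(\beta)=1$.

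For the inductive step, suppose \eqref{Bkcontrol} holds for some $k\ge1$. Then, writing $B^{k+1}f=B(B^kf)$ and using that $B$ preserves the pointwise order of non-negative functions,
\begin{align*}
B^{k+1}f(t)&\le g(t)\int_0^t u_\Phi(t-r)\,\frac{(C\Gamma(\beta)g(r))^k}{\Gamma(k\beta)}\int_0^r(r-s)^{k\beta-1}f(s)\,ds\,dr\\
&\le \frac{(C\Gamma(\beta))^k g(t)^{k+1}}{\Gamma(k\beta)}\,C\int_0^t(t-r)^{\beta-1}\int_0^r(r-s)^{k\beta-1}f(s)\,ds\,dr,
\end{align*}
where I used $u_\Phi(t-r)\le C(t-r)^{\beta-1}$ and the monotonicity of $g$ (so $g(r)\le g(t)$ for $r\le t$) to pull $g$ out. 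Next I would apply Tonelli's theorem to swap the order of integration in $s$ and $r$, giving $\int_0^t f(s)\big(\int_s^t(t-r)^{\beta-1}(r-s)^{k\beta-1}\,dr\big)\,ds$, and then evaluate the inner integral by the substitution $r=s+(t-s)w$: it equals $(t-s)^{(k+1)\beta-1}\int_0^1(1-w)^{\beta-1}w^{k\beta-1}\,dw=(t-s)^{(k+1)\beta-1}\,\frac{\Gamma(\beta)\Gamma(k\beta)}{\Gamma((k+1)\beta)}$. Substituting back, the factor $\Gamma(k\beta)$ cancels and one power of $\Gamma(\beta)$ combines with $(C\Gamma(\beta))^k$ to produce $(C\Gamma(\beta))^{k+1}$, yielding
\begin{equation*}
B^{k+1}f(t)\le \frac{(C\Gamma(\beta)g(t))^{k+1}}{\Gamma((k+1)\beta)}\int_0^t(t-s)^{(k+1)\beta-1}f(s)\,ds,
\end{equation*}
which is \eqref{Bkcontrol} with $k$ replaced by $k+1$, completing the induction.

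The main obstacle — really the only non-routine point — is the Fubini/Tonelli interchange: one must check that the iterated integral $\int_0^t\int_0^r(t-r)^{\beta-1}(r-s)^{k\beta-1}f(s)\,ds\,dr$ is finite (or that the integrand is non-negative, which it is, so Tonelli applies directly), and that the resulting Beta integral $\int_0^1(1-w)^{\beta-1}w^{k\beta-1}\,dw$ converges — this holds since $\beta>0$ and $k\beta>0$. Everything else is bookkeeping of the constants; the same computation already appears in the proof of Lemma \ref{lem:convsum}, so I would simply mirror that argument, the difference being the presence of the non-decreasing weight $g$, which is handled cleanly by the bound $g(r)\le g(t)$ for $r\le t$.
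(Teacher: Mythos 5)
Your proof is correct and follows essentially the same route as the paper's: induction on $k$, the pointwise bound $u_\Phi(\tau)\le C\tau^{\beta-1}$ from Assumption \ref{assp}, monotonicity of $g$ to pull $g(r)\le g(t)$ out, and the Fubini/Beta-integral computation via $r=s+(t-s)w$ to merge the exponents and the Gamma factors. The only quibble is your side remark that $g$ is bounded on $[0,T]$ because it is non-decreasing and in $L^1$ (it could still blow up as $t\to T^-$), but this plays no role in the argument, which only uses the finite value $g(t)$ pointwise.
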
 
\begin{proof}
	By Assumption \ref{assp} we know there exists a constant $C>0$ such that $u_\Phi(t)\le Ct^{\beta-1}$. Then, for $k=1$ we have
	\begin{equation*}
	Bf(t)=g(t)\int_0^t u_\Phi(t-s)f(s)ds\le Cg(t) \int_0^t (t-s)^{\beta-1}f(s)ds.
	\end{equation*}
	Now let us suppose Equation \eqref{Bkcontrol} holds for some $k \ge 1$. Then we have
	\begin{align*}
	B^{k+1}f(t)&=B(B^kf(t))(t)\le g(t)\int_0^tu_\Phi(t-s)\frac{(C\Gamma(\beta)g(s))^k}{\Gamma(k\beta)}\int_0^s(s-\tau)^{k\beta-1}f(\tau)d\tau ds\\
	&\le \frac{(C\Gamma(\beta)g(t))^{k+1}}{\Gamma(k\beta)\Gamma(\beta)}\int_0^t (t-s)^{\beta}\int_0^s(s-\tau)^{k\beta-1}f(\tau)d\tau ds\\
	&=\frac{(C\Gamma(\beta)g(t))^{k+1}}{\Gamma(k\beta)\Gamma(\beta)}\int_0^t f(\tau)(t-\tau)^{(k+1)\beta-1}\int_0^1(1-w)^{\beta-1}w^{k\beta-1}dw d\tau\\
	&=\frac{(C\Gamma(\beta)g(t))^{k+1}}{\Gamma((k+1)\beta)}\int_0^t f(\tau)(t-\tau)^{(k+1)\beta-1}d\tau,
	\end{align*}
	where we used the fact that $g$ is non-decreasing and the change of variables \linebreak $s=\tau+(t-\tau)w$.
\end{proof}
In the case $f(t)\equiv 1$, we can control $B^k1(t)$ with the functions $u^*_k(t)$ as stated in the following Lemma.
\begin{lem}
For any $k \ge 1$ it holds
\begin{equation}\label{Bkcontrol2}
B^k1(t)\le (g(t))^ku_k^*(t)
\end{equation}
\end{lem}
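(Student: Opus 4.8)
The statement is proved by induction on $k$, using the monotonicity of $B$ observed just before the Lemma (if $f_1\le f_2$ a.e. then $Bf_1\le Bf_2$, valid because $u_\Phi$ and $g$ are non-negative) together with the hypothesis that $g$ is non-decreasing.

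For the base case $k=1$ I would simply unwind the definitions: $B1(t)=g(t)\int_0^t u_\Phi(t-s)\,ds=g(t)U_\Phi(t)=g(t)u_1^*(t)$, so in fact \eqref{Bkcontrol2} holds with equality when $k=1$. This step is essential as the genuine anchor of the induction, because the recursion $u_{k+1}^*(t)=\int_0^t u_\Phi(t-s)u_k^*(s)\,ds$ is only declared for $k\ge 1$; the passage from $k=1$ to $k=2$ must therefore start from the equality $B1=g\,u_1^*$ rather than from $u_0^*\equiv 1$.

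For the inductive step, assume $B^k1(t)\le (g(t))^k u_k^*(t)$ for some $k\ge 1$. Writing $B^{k+1}1(t)=B(B^k1)(t)$ and applying the monotonicity of $B$ to the inductive hypothesis gives
\begin{equation*}
B^{k+1}1(t)\le g(t)\int_0^t u_\Phi(t-s)(g(s))^k u_k^*(s)\,ds.
\end{equation*}
Since $g$ is non-decreasing we have $(g(s))^k\le (g(t))^k$ for $s\le t$, and since $u_\Phi(t-s)u_k^*(s)\ge 0$ this can be inserted under the integral sign; pulling out the resulting factor $(g(t))^{k}$ and combining it with the leading $g(t)$ yields
\begin{equation*}
B^{k+1}1(t)\le (g(t))^{k+1}\int_0^t u_\Phi(t-s)u_k^*(s)\,ds=(g(t))^{k+1}u_{k+1}^*(t),
\end{equation*}
where the last equality is precisely the definition of $u_{k+1}^*$ (applicable as $k\ge 1$). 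This closes the induction.

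There is no substantial obstacle here; the only point demanding attention is the one noted above, namely that the index range in the defining recursion of $(u_k^*)$ forces the induction to be anchored at $k=1$ via the direct computation $B1=g\,U_\Phi=g\,u_1^*$, and that the monotonicity argument requires the non-negativity of $u_\Phi$, $u_k^*$ and $g$ as well as the monotonicity of $g$.
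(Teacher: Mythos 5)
Your proof is correct and follows essentially the same route as the paper's: induction anchored at the equality $B1=g\,U_\Phi=g\,u_1^*$, then the monotonicity of $B$ applied to the inductive hypothesis and the bound $(g(s))^k\le(g(t))^k$ for $s\le t$ to pull the factor out of the integral and recognize $u_{k+1}^*$. Your remark about why the induction must start at $k=1$ (the recursion for $u_{k+1}^*$ being declared only for $k\ge 1$) is a fair point of care that the paper handles implicitly in the same way.
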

\begin{proof}
	Let us first observe that $B1(t)=g(t)U_\Phi(t)=g(t)u_1^*(t)$. Let us suppose Equation \eqref{Bkcontrol2} holds for some $k \ge 1$. Being $g(t)$ non-decreasing, we have
	\begin{multline*}
	B^{k+1}1(t)=B(B^k1(t))(t)\le B((g(t))^ku_k^*(t))(t)=\\=g(t)\int_0^tu_\Phi(t-s)(g(s))^ku_k^*(s)ds\le (g(t))^{k+1}u_{k+1}^*(t),
	\end{multline*}
	concluding the proof.
\end{proof}
Now let us focus on series defined via the operator $B$ and its powers.
\begin{lem}\label{lemseries}
	Let $f \in L^1([0,T])$. Then $\sum_{k=1}^{+\infty}B^kf(t)$ normally converges for any $t \in [0,T]$.
\end{lem}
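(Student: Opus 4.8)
The goal is to prove normal convergence of $\sum_{k=1}^{+\infty} B^k f(t)$ on $[0,T]$ for $f \in L^1([0,T])$, non-negative without loss of generality (since $|B^kf| \le B^k|f|$ by positivity of $g$ and $u_\Phi$, and $B^k|f|$ is what we need to bound). The key ingredient is already in hand: Lemma \ref{PEBk} gives the pointwise bound
\begin{equation*}
B^k f(t) \le \frac{(C\Gamma(\beta)g(t))^k}{\Gamma(k\beta)}\int_0^t (t-s)^{k\beta-1}f(s)\,ds, \qquad k \ge 1.
\end{equation*}
So the plan is to estimate the right-hand side uniformly in $t \in [0,T]$ and show the resulting series of sup-norms converges.

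First I would fix $T>0$ and set $G := \sup_{t \in [0,T]} g(t) = g(T)$, which is finite since $g$ is non-decreasing and in $L^1$; actually, as $g$ is non-decreasing it is bounded on $[0,T]$ by its right limit at $T$, and $g(T)$ makes sense as the essential supremum. For the inner integral, I would bound $(t-s)^{k\beta-1}$ crudely. If $k\beta \ge 1$, then $(t-s)^{k\beta-1} \le T^{k\beta-1}$ for $s \in [0,t]$, so $\int_0^t (t-s)^{k\beta-1} f(s)\,ds \le T^{k\beta-1}\|f\|_{L^1}$. For the finitely many small $k$ with $k\beta < 1$, the same integral is still finite: it is a convolution of an $L^1$ function with a locally integrable kernel, hence bounded on $[0,T]$ by some constant depending on $k$; since there are only finitely many such $k$, they do not affect convergence of the tail. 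So for $k$ large enough,
\begin{equation*}
\sup_{t \in [0,T]} B^k f(t) \le \frac{(C\Gamma(\beta) G)^k T^{k\beta-1}}{\Gamma(k\beta)}\|f\|_{L^1([0,T])} = \frac{\|f\|_{L^1}}{T}\cdot\frac{(C\Gamma(\beta) G T^\beta)^k}{\Gamma(k\beta)}.
\end{equation*}

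Then I would invoke convergence of $\sum_{k} z^k/\Gamma(k\beta)$ for every $z \in \R$ — this is essentially the two-parameter Mittag-Leffler function $E_{\beta,0}(z)$, or can be seen directly from the ratio test using $\Gamma(k\beta)/\Gamma((k+1)\beta) \to 0$ (Stirling), exactly the kind of estimate already used in Lemma \ref{lem:convsum} and Lemma \ref{PEBk}. Hence $\sum_{k=1}^{+\infty} \sup_{t\in[0,T]} B^k f(t) < +\infty$, which is precisely normal convergence on $[0,T]$, and since $T$ was arbitrary the conclusion follows.

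I do not expect a serious obstacle here; the whole difficulty has been front-loaded into Lemma \ref{PEBk}, whose proof already handled the Beta-function bookkeeping that produces the $1/\Gamma(k\beta)$ gain. The only mild care-point is the small-$k$ regime where $k\beta < 1$ and the kernel $(t-s)^{k\beta-1}$ is singular: there one cannot bound it by a power of $T$, but Young's inequality for convolutions (or just noting $t \mapsto \int_0^t (t-s)^{k\beta-1}f(s)ds$ is in $L^\infty$ when $f\in L^1$ and the kernel is in $L^1_{\rm loc}$... actually one needs the kernel in $L^\infty$ near $0$, which fails — so instead bound $\int_0^t(t-s)^{k\beta-1}f(s)ds$ by splitting or simply absorb these finitely many terms as an additive constant) disposes of it. Since only finitely many indices are affected and we only need convergence of the series, this is harmless.
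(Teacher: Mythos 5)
Your argument follows the paper's proof essentially verbatim: apply Lemma \ref{PEBk}, replace $(t-s)^{k\beta-1}$ by $T^{k\beta-1}$, and sum the resulting series against $1/\Gamma(k\beta)$, recognizing a (derivative of the) Mittag-Leffler function. The one point where you diverge is that you notice the step $(t-s)^{k\beta-1}\le T^{k\beta-1}$ is only legitimate when $k\beta\ge 1$: for $k\beta<1$ the map $x\mapsto x^{k\beta-1}$ is decreasing on $(0,T]$, so the inequality reverses. The paper applies this bound for every $k\ge 1$ without comment, so on this point you are being more careful than the source.

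Your patch for the finitely many indices with $k\beta<1$ is, however, not correct as stated. For $f\in L^1([0,T])$ the function $t\mapsto\int_0^t(t-s)^{k\beta-1}f(s)\,ds$ is the convolution of an $L^1$ function with a kernel that is only in $L^1_{\rm loc}$ and is unbounded near $0$; Young's inequality gives that it is again in $L^1$, not that it is bounded on $[0,T]$, and it can equal $+\infty$ at isolated points (take $k=1$ and $f(s)=(t_0-s)^{-\gamma}\mathbf{1}_{(0,t_0)}(s)$ with $\beta\le\gamma<1$; in the stable case $u_\Phi(t)=t^{\beta-1}/\Gamma(\beta)$ this is not an artifact of the estimate). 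You half-acknowledge this (``one needs the kernel in $L^\infty$ near $0$, which fails'') but then fall back on absorbing these terms as an additive constant, which presupposes exactly the boundedness that is in question. What one can honestly conclude is that the tail $\sum_{k\ge \lceil 1/\beta\rceil}B^kf$ converges normally on $[0,T]$ --- which is all that is used downstream, e.g.\ to get $B^nx\to 0$ --- and that the full series converges for almost every $t$; uniform normal convergence of the entire series would need an extra hypothesis such as $f\in L^\infty([0,T])$ or a different treatment of the first few terms. Since the paper's own proof contains the same lacuna, this is a defect you inherited and partially diagnosed rather than one you introduced.
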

\begin{proof}
	Let us first observe that without loss of generality we can suppose $f(t)\ge 0$. By Lemma \ref{PEBk}, we have
	\begin{align*}
	\sum_{k=1}^{+\infty}B^kf(t)&\le \sum_{k=1}^{+\infty}\frac{(C\Gamma(\beta)g(t))^k}{\Gamma(k\beta)}\int_0^t(t-s)^{k\beta-1}f(s)ds\\
	&\le \sum_{k=1}^{+\infty}\frac{(C\Gamma(\beta)g(t))^k}{\Gamma(k\beta)}T^{k\beta-1}\int_0^tf(s)ds\\
	& \le \Norm{f}{L^1([0,T])}\sum_{k=1}^{+\infty}\frac{(C\Gamma(\beta)g(T))^k}{\Gamma(k\beta)}T^{k\beta-1}\\
	&=\frac{C\beta\Gamma(\beta)g(T)\Norm{f}{L^1([0,T])}}{T^{1-\beta}}\sum_{k=1}^{+\infty}\frac{k(C\Gamma(\beta)g(T)T^\beta)^{k-1}}{\Gamma(k\beta+1)}\\
	&=\frac{C\beta\Gamma(\beta)g(T)\Norm{f}{L^1([0,T])}}{T^{1-\beta}}E'_\beta(C\beta\Gamma(\beta)g(T)T^\beta),
	\end{align*}
	where $E_\beta(z)$ is the one-parameter Mittag-Leffler defined in Remark \ref{MLrmk}.
\end{proof}
\begin{rmk}
	Let us remark that the previous Lemma implies that \linebreak $\lim_{k \to +\infty}B^kf(t)=0$ uniformly in $[0,T]$.
\end{rmk}
Let us show a last technical Lemma.
\begin{lem}\label{confr2}
	Let $f_1,f_2 \in L^1(0,T)$ with $f_1,f_2 \ge 0$ and $f_1$ non-decreasing. Then
	\begin{equation}\label{eqconfr2}
	B^k(f_1f_2)(t)\le f_1(t)B^kf_2(t)
	\end{equation}
	for any $t \in [0,T]$.
\end{lem}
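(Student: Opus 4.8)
The plan is a straightforward induction on $k$, resting on two elementary features of $B$ that were already observed above: it is \emph{monotone}, i.e.\ $0\le h_1\le h_2$ a.e.\ on $[0,T]$ forces $Bh_1(t)\le Bh_2(t)$ for every $t$ (immediate from $g,u_\Phi\ge 0$, and perfectly valid even when values in $[0,+\infty]$ are allowed), and it preserves non-negativity, so that $B^kf_2\ge 0$ for every $k\ge 0$. A non-decreasing function in $L^1(0,T)$ is locally bounded on $[0,T)$, and by Lemma~\ref{PEBk} the iterates $B^k$ act on non-negative $L^1$ data; hence all the quantities below are well defined, with the harmless convention that the displayed inequalities are read pointwise in $[0,+\infty]$.

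First I would isolate the case $k=1$ in the sharper form: for any non-negative measurable $h$ on $[0,T]$ one has $B(f_1h)(t)\le f_1(t)\,Bh(t)$. Indeed, since $f_1$ is non-decreasing we have $f_1(s)\le f_1(t)$ for every $s\in[0,t]$, and $u_\Phi(t-s)h(s)\ge 0$, so $f_1(t)$ may be pulled out of the integral:
\begin{equation*}
B(f_1h)(t)=g(t)\int_0^t u_\Phi(t-s)f_1(s)h(s)\,ds\le f_1(t)\,g(t)\int_0^t u_\Phi(t-s)h(s)\,ds=f_1(t)\,Bh(t).
\end{equation*}
Taking $h=f_2$ gives the base case of \eqref{eqconfr2}.

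For the inductive step, assume \eqref{eqconfr2} holds for some $k\ge 1$. The functions $B^k(f_1f_2)$ and $f_1\cdot B^kf_2$ are both non-negative, so applying the monotone operator $B$ to the inductive hypothesis and then the $k=1$ inequality with $h=B^kf_2\ge 0$ yields
\begin{equation*}
B^{k+1}(f_1f_2)(t)=B\bigl(B^k(f_1f_2)\bigr)(t)\le B\bigl(f_1\cdot B^kf_2\bigr)(t)\le f_1(t)\,B\bigl(B^kf_2\bigr)(t)=f_1(t)\,B^{k+1}f_2(t),
\end{equation*}
which closes the induction.

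I do not expect a genuine obstacle here: the whole content of the lemma is the monotonicity of $f_1$, used through the ``pull the majorant out of the integral'' step, together with the order-preservation of $B$ propagated along the induction. The only point worth a remark is that $f_1f_2$ need not a priori lie in $L^1(0,T)$, which is exactly why the inequalities are stated pointwise and interpreted in $[0,+\infty]$; this causes no trouble because in each intended application of the lemma — notably with $f_2\equiv 1$, giving $B^k(f_1)(t)\le f_1(t)\,B^k1(t)$ and hence, via Lemma~\ref{lemseries}, a finite majorant — the right-hand side is finite.
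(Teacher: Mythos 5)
Your proof is correct and follows essentially the same route as the paper: the base case pulls $f_1(t)$ out of the integral using monotonicity of $f_1$, and the inductive step applies the order-preserving operator $B$ to the inductive hypothesis and then reuses the $k=1$ inequality with $h=B^kf_2$. Your explicit statement of the sharper $k=1$ form for general non-negative $h$ and the remark about interpreting the inequalities in $[0,+\infty]$ are minor clarifications of steps the paper leaves implicit, not a different argument.
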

\begin{proof}
	As $k=1$ we have, being $f_1$ non-decreasing,
	\begin{equation}\label{eqconfr3}
	B(f_1f_2)(t)=g(t)\int_0^t u(t-s)f_1(s)f_2(s)ds\le g(t)f_1(t) \int_0^t u(t-s)f_2(s)ds=f_1(t)B(f_2)(t).
	\end{equation}
	Now let us suppose Equation \eqref{eqconfr2} holds for some $k \ge 1$. Then it holds
	\begin{equation*}
	B^{k+1}(f_1f_2)(t)=B(B^k(f_1f_2))(t)\le B(f_1B^kf_2)(t)\le f_1(t)B^{k+1}f_2(t),
	\end{equation*}
	concluding the proof.
	\end{proof}
\subsection{Proof of Theorem \ref{thm:Gron}}\label{subsectionproof}
Let us first rewrite inequality \eqref{intineq} in terms of $B$, i.e.
\begin{equation}\label{est1}
x(t)\le a(t)+Bx(t), \ t \in [0,T].
\end{equation}
We want to show that for any $n \ge 1$
\begin{equation}\label{est3}
x(t)\le \sum_{k=0}^{n-1}B^ka(t)+B^nx(t), \ t \in [0,T].
\end{equation}
As $n=1$ this is actually Equation \eqref{est1}. Let us suppose inequality \eqref{est3} holds for some $n \ge 1$. Applying $B^n$ on both sides of inequality \eqref{est1} we have
\begin{equation*}
B^n x(t)\le B^n a(t)+B^{n+1}x(t), \ t \in [0,T].
\end{equation*}
Using last relation in inequality \eqref{est3} we get
\begin{equation}
x(t)\le \sum_{k=0}^{n}B^ka(t)+B^{n+1}x(t), \ t \in [0,T].
\end{equation}
Hence, by induction, we know that inequality \eqref{est3} holds for any $n \ge 1$. In particular we can take the limit as $n \to +\infty$ in \eqref{est3}, recalling that $\lim_{n \to +\infty}B^nx(t)=0$, to achieve
\begin{equation}\label{est4}
x(t)\le a(t)+\sum_{k=1}^{+\infty}B^ka(t),
\end{equation}
proving the first part of the Theorem.\\
Concerning the second part, we have, by Lemma \ref{PEBk} and monotone convergence theorem,
\begin{align*}
	\sum_{k=1}^{+\infty}B^ka(t)&\le \int_0^t\sum_{k=1}^{+\infty}\frac{(C\Gamma(\beta)g(t))^k}{\Gamma(k\beta)}(t-s)^{k\beta-1}a(s)ds\\
	&= C\beta \Gamma(\beta)g(t)\int_0^t(t-s)^{\beta-1}\left(\sum_{k=1}^{+\infty}\frac{k(C\Gamma(\beta)g(t)(t-s)^\beta)^{k-1}}{\Gamma(k\beta+1)}\right)a(s)ds\\
	&\le C\Gamma(\beta+1)g(t)\int_0^t E'_{\beta}(C\Gamma(\beta)g(t)(t-s))(t-s)^{\beta-1}a(s)ds.
\end{align*}
Concerning the third part of the Theorem, let us observe that, since $a$ is non-decreasing,
\begin{align*}
	x(t)\le \sum_{k=0}^{+\infty}B^ka(t)\le a(t)\sum_{k=0}^{+\infty}(g(T))^ku_k^*(t)=a(t)\fe_\Phi(t,g(T)),
	\end{align*}
	where last equality follows from Theorem \ref{thm:series}.\qed
\section{Consequences of the generalized Gr\"onwall inequality}\label{Sec6}
In this section we focus on some consequences of the generalized Gr\"onwall inequality proved in the previous section.
\subsection{Continuous dependence on the initial data}
Let us first consider continuous dependence on the initial data. Fix a Banach space $(X,|\cdot|)$, a time horizon $T>0$ and a function $F:[0,T]\times X \to X$ satisfying hypotheses $A1$ and $A2$ of Theorem \ref{ex}. We would like to study the continuity in $C(J',X)$ for some time interval $J' \subseteq J$ of the solutions of the Cauchy problem \eqref{Cprob} as the initial datum $f_0 \in X$ varies. To do this, we first need to show that, for fixed $f_0 \in X$, for any initial data $\widetilde{f}_0$ taken in a suitable neighbourhood of $f_0$ there exists a unique solution $f:J' \to X$ of \eqref{Cprob} where $J'$ is independent of $\widetilde{f}_0$.
\begin{prop}\label{prop:simex1}
	Let $\Phi \in \SBF$ satisfy Assumption \ref{assp} and $F:[0,T]\times X \to X$ satisfy hypotheses $A1$ and $A2$. Fix $f_0 \in X$ and $R>0$. Then there exists $T' \in (0,T]$ such that for any $\widetilde{f}_0 \in B_{1}(f_0)$ there exists a unique solution $f \in C(J',B_R(\widetilde{f}_0))$ of \eqref{Cprob} admitting $\widetilde{f}_0$ as initial datum, where $J'=[0,T']$.
\end{prop}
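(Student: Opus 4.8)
The plan is to rerun the Picard/contraction argument behind Theorem~\ref{ex}, keeping careful track of the fact that every constant involved can be chosen uniformly over $\widetilde{f}_0 \in B_1(f_0)$. The crucial observation is that $\widetilde{f}_0 \in B_1(f_0)$ forces $|\widetilde{f}_0| < |f_0|+1$, so that
\[
B_R(\widetilde{f}_0) \subseteq B_{R'}, \qquad R' := R + |f_0| + 1,
\]
a \emph{fixed} ball independent of $\widetilde{f}_0$. Applying hypotheses $A1$ and $A2$ to $B_{R'}$ we obtain constants $C_{R'}, L_{R'} > 0$ with $|F(t,x)| \le C_{R'}$ and $|F(t,x)-F(t,z)| \le L_{R'}|x-z|$ for a.e.\ $t \in J$ and all $x,z \in B_R(\widetilde{f}_0)$, and these bounds hold simultaneously for every $\widetilde{f}_0 \in B_1(f_0)$.

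For each such $\widetilde{f}_0$ I would introduce the Picard operator $A_{\widetilde{f}_0}f(t) = \widetilde{f}_0 + \cI_t^\Phi F(t,f(t))$, whose fixed points in $C(J',B_R(\widetilde{f}_0))$ coincide with the solutions of \eqref{Cprob} on $J'$ with initial datum $\widetilde{f}_0$, by the equivalence lemma at the beginning of Section~\ref{Sec3}. Repeating the estimate of Lemma~\ref{lemT1} with $C_{R'}$ in place of $C_{\widetilde{R}}$ gives, for any $f \in C(J',B_R(\widetilde{f}_0))$,
\[
|A_{\widetilde{f}_0}f(t)-\widetilde{f}_0| \le \int_0^t u_\Phi(t-s)|F(s,f(s))|\,ds \le C_{R'}U(t) \le C_{R'}U(T'), \qquad t \in J'=[0,T'].
\]
Since $U(t) \le Ct^{\beta} \to 0$ as $t \to 0^+$ by Remark~\ref{rmk:contU}, I can fix $T' \le T$ with $C_{R'}U(T') < R$; then $A_{\widetilde{f}_0}$ maps $C(J',B_R(\widetilde{f}_0))$ into itself (in fact into $C^\beta(J',B_R(\widetilde{f}_0))$, arguing exactly as in Lemma~\ref{lemreg}), and this $T'$ does not depend on $\widetilde{f}_0$.

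Next I would run the contraction step verbatim as in the proof of Theorem~\ref{ex}: the $\widetilde{f}_0$ term cancels in $A_{\widetilde{f}_0}f - A_{\widetilde{f}_0}g = \cI_t^\Phi\big(F(\cdot,f(\cdot))-F(\cdot,g(\cdot))\big)$, so the same H\"older-inequality computation (with a fixed $p \in (1,1/(1-\beta))$, $p'$ its conjugate) yields
\[
\Norm{A_{\widetilde{f}_0}f-A_{\widetilde{f}_0}g}{\tau} \le \kappa(\tau)\,\Norm{f-g}{\tau}, \qquad \kappa(\tau) := C L_{R'}\,\frac{(T')^{\frac{p(\beta-1)+1}{p}}}{(p(\beta-1)+1)^{1/p}}\left(\frac{1}{p'\tau}\right)^{1/p'}.
\]
Here $\kappa(\tau)$ depends only on $C, L_{R'}, p, \beta, T'$ — not on $\widetilde{f}_0$ — and $\kappa(\tau) \to 0$ as $\tau \to +\infty$, so a single choice of $\tau > 0$ makes $\kappa(\tau) < 1$ for all $\widetilde{f}_0 \in B_1(f_0)$ at once, and each $A_{\widetilde{f}_0}$ is then a contraction on the Banach space $(C(J',B_R(\widetilde{f}_0)),\Norm{\cdot}{\tau})$. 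By the contraction theorem, for every $\widetilde{f}_0 \in B_1(f_0)$ there is a unique fixed point in $C(J',B_R(\widetilde{f}_0))$, i.e.\ a unique solution of \eqref{Cprob} on $J'$ with that initial datum, and $J'$ is independent of $\widetilde{f}_0$. I do not anticipate a genuine difficulty: the argument is simply a uniform-in-parameter reprise of Theorem~\ref{ex}, and the one point deserving explicit mention — the place where the uniformity really enters — is the inclusion $B_R(\widetilde{f}_0) \subseteq B_{R'}$ valid for all $\widetilde{f}_0 \in B_1(f_0)$, since it is precisely this that lets $C_{R'}$, $L_{R'}$, the time horizon $T'$ and the Bielecki parameter $\tau$ be chosen independently of $\widetilde{f}_0$.
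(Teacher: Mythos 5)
Your argument is correct and is essentially the paper's own proof: the paper likewise sets $\widetilde{R}=R+|f_0|+1$, notes that $B_R(\widetilde{f}_0)\subseteq B_{\widetilde{R}}$ uniformly in $\widetilde{f}_0\in B_1(f_0)$, and chooses $T'$ with $C_{\widetilde{R}}U(T')<R$, leaving the rest of the contraction argument of Theorem \ref{ex} unchanged. Your write-up just spells out the uniformity of the Lipschitz constant and the Bielecki parameter $\tau$, which the paper leaves implicit.
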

\begin{proof}
	To prove this statement we need to show that we can consider $T'$ in Lemma \ref{lemT1} to be equal for any $\widetilde{f}_0 \in B_{1}(f_0)$. This time let us define $\widetilde{R}=R+|f_0|+1$ and observe that if $\widetilde{f}_0 \in B_{1}(f_0)$ and $x \in B_R(\widetilde{f}_0)$ then we have 
	\begin{equation*}
	|x|\le |x-\widetilde{f}_0|+|\widetilde{f}_0-f_0|+|f_0|<R+1+|f_0|.
	\end{equation*}
	Choosing $T'>0$ such that $C_{\widetilde{R}}U(T')<R$ we conclude the proof.
\end{proof}
Let us fix $f_0 \in X$, $R>0$ and $T'>0$ as in Proposition \ref{prop:simex1}. Define the function
\begin{equation*}
\Psi:\widetilde{f}_0\in B_1(f_0) \to \Psi(\cdot;\widetilde{f}_0) \in C(J',X)
\end{equation*}  
where, for fixed $\widetilde{f}_0 \in B_1(f_0)$, $t \in [0,T] \mapsto \Psi(t;\widetilde{f}_0) \in X$ is solution of the Cauchy problem \eqref{Cprob} with initial datum $\widetilde{f}_0$. We want to show that $\Psi$ is continuous in $f_0$.
\begin{prop}
	Let $\Phi \in \SBF$ satisfy Assumption \ref{assp} and $F:[0,T]\times X \to X$ satisfy hypotheses $A1$ and $A2$. Fix $f_0 \in X$ and $R>0$. Fix $T'>0$ as in Proposition \ref{prop:simex1} and define $\Psi$ as mentioned before. Then $\Psi$ is continuous in $f_0$.
\end{prop}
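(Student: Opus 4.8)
The plan is to reduce the continuity of $\Psi$ at $f_0$ to the generalized Grönwall inequality (Theorem~\ref{thm:Gron}). Fix $\widetilde f_0\in B_1(f_0)$ and set $f=\Psi(\cdot;f_0)$, $\widetilde f=\Psi(\cdot;\widetilde f_0)$. By Proposition~\ref{prop:simex1} both functions are defined on the same interval $J'=[0,T']$ and take values in the ball $B_{\widetilde R}$ with $\widetilde R=R+|f_0|+1$; moreover, being solutions of \eqref{Cprob}, they satisfy the integral equations
\begin{equation*}
f(t)=f_0+\cI_t^\Phi F(t,f(t)),\qquad \widetilde f(t)=\widetilde f_0+\cI_t^\Phi F(t,\widetilde f(t)),\qquad t\in J'.
\end{equation*}

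First I would subtract these two identities, use the non-negativity of $u_\Phi$ and hypothesis $A2$ with the single Lipschitz constant $L_{\widetilde R}$ (legitimate since both $f(t)$ and $\widetilde f(t)$ lie in $B_{\widetilde R}$), obtaining, with $x(t):=|f(t)-\widetilde f(t)|$,
\begin{equation*}
x(t)\le |f_0-\widetilde f_0|+L_{\widetilde R}\int_0^t u_\Phi(t-s)\,x(s)\,ds=|f_0-\widetilde f_0|+L_{\widetilde R}\,\cI_t^\Phi x(t),\qquad t\in J'.
\end{equation*}
Here $x\in L^1(0,T')$ because $f,\widetilde f\in C(J',X)$, and the constant functions $a(t)\equiv|f_0-\widetilde f_0|$ and $g(t)\equiv L_{\widetilde R}$ are non-negative, non-decreasing and integrable, while $\Phi\in\SBF$ satisfies Assumption~\ref{assp} by hypothesis. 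Thus Theorem~\ref{thm:Gron} applies, and since $a$ is trivially non-decreasing its third conclusion gives
\begin{equation*}
|f(t)-\widetilde f(t)|\le |f_0-\widetilde f_0|\,\fe_\Phi(t;L_{\widetilde R}),\qquad t\in J'.
\end{equation*}

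Finally, recalling that $L_\Phi(t)$ is non-decreasing, so that $t\mapsto\fe_\Phi(t;\lambda)$ is non-decreasing for $\lambda\ge 0$, I would take the supremum over $t\in J'$ to conclude
\begin{equation*}
\Norm{\Psi(\cdot;f_0)-\Psi(\cdot;\widetilde f_0)}{0}\le \fe_\Phi(T';L_{\widetilde R})\,|f_0-\widetilde f_0|.
\end{equation*}
Since $\fe_\Phi(T';L_{\widetilde R})$ is a finite constant that does not depend on $\widetilde f_0$, letting $\widetilde f_0\to f_0$ shows that $\Psi$ is continuous at $f_0$ (indeed Lipschitz on $B_1(f_0)$). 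I do not expect a genuine obstacle here: the only delicate point is the uniform membership of both solutions in one ball $B_{\widetilde R}$, which is exactly what Proposition~\ref{prop:simex1} was designed to guarantee, so that a single Lipschitz constant controls the Grönwall estimate.
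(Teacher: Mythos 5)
Your argument is correct and is essentially the paper's own proof: subtract the integral equations, bound the difference of the nonlinearities by the single Lipschitz constant $L_{\widetilde{R}}$ with $\widetilde{R}=R+1+|f_0|$ (which is exactly why Proposition \ref{prop:simex1} fixes a common interval and ball), and apply the third part of Theorem \ref{thm:Gron} with constant $a$ and $g$ to get $|\Psi(t;\widetilde{f}_0)-\Psi(t;f_0)|\le |\widetilde{f}_0-f_0|\,\fe_\Phi(T';L_{\widetilde{R}})$. The Lipschitz-continuity observation you add at the end also matches the remark following the paper's proof.
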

\begin{proof}
	Let us define $h(t;\widetilde{f}_0)=|\Psi(t;\widetilde{f}_0)-\Psi(t;f_0)|$. Since $\Psi(t;\widetilde{f}_0)$ is solution of \eqref{Cprob}, we have, defining $\widetilde{R}=R+1+|f_0|$,
	\begin{align}\label{est11}
	\begin{split}
	h(t,\widetilde{f}_0)&\le |\widetilde{f}_0-f_0|+\int_0^tu_\Phi(t-s)\left|F(s,\Psi(s;\widetilde{f_0}))-F(s,\Psi(s;f_0))\right|\\
	&\le |\widetilde{f}_0-f_0|+L_{\widetilde{R}}\int_0^tu_\Phi(t-s)h(s,\widetilde{f}_0)ds.
	\end{split}
	\end{align}
	By the third part of Theorem \ref{thm:Gron} we have
	\begin{equation*}
	h(t;\widetilde{f_0})\le |\widetilde{f}_0-f_0|\fe_\Phi(t;L_{\widetilde{R}})\le |\widetilde{f}_0-f_0|\fe_\Phi(T';L_{\widetilde{R}}).
	\end{equation*} 
	Taking the maximum as $t \in [0,T']$ we conclude the proof.
\end{proof}
\begin{rmk}
	With a similar proof, we have that $\Psi$ is Lipschitz continuous in $B_1(f_0)$. 
\end{rmk}
\subsection{Continuous dependence on a parameter}
A similar approach can be used to show continuous dependence on a parameter. Fix a Banach space $(X,|\cdot|)$, a locally compact metric space $(V,d)$ and a function $F:[0,T]\times X \times V \to X$. Now we want to focus on the parametric Cauchy problem
\begin{equation}\label{Cprobpar}
\begin{cases}
\partial_t^\Phi f(t)=F(t,f(t);v) & t \in [0,T] \mbox{ a.e.}\\
f(0)=f_0
\end{cases}
\end{equation}
As before, we need to show that there exists a common time-interval of existence.
\begin{prop}\label{prop:simex2}
	Let $\Phi \in \SBF$ satisfy Assumption \ref{assp} and $F:[0,T]\times X \times V \to X$ such that
	\begin{itemize}
		\item[$A1'$] For any compact set $K \subseteq V$ and any ball $B_R$ in $X$ there exists a constant $C_{R,K}>0$ such that $|F(t,x;v)|\le C_{R,K}$ for almost any $t \in J$, any $x \in B_R$ and any $v \in K$;
		\item[$A2'$] For any compact set $K \subseteq V$ and any ball $B_R$ in $X$ there exists a constant $L_{R,K}>0$ such that $|F(t,x;v)-F(t,z;v)|\le L_{R,K}|x-z|$ for almost any $t \in J$, any $x,z \in B_R$ and any $v \in K$.
	\end{itemize}
	Fix $f_0 \in X$, $v_0 \in V$ and $R>0$. Then there exists $T' \in (0,T]$ and a compact neighbourhood $K$ of $v_0$ such that for any fixed $v \in K$ there exists a unique solution $f \in C(J',B_R(f_0))$ of \eqref{Cprobpar}, where $J'=[0,T']$.
\end{prop}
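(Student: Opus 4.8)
The plan is to reduce the parametric problem \eqref{Cprobpar} to the non-parametric result of Theorem \ref{ex}, after fixing once and for all a compact set on which the constants furnished by $A1'$ and $A2'$ are uniform in the parameter. Since $(V,d)$ is locally compact, I would first pick a compact neighbourhood $K$ of $v_0$; this will be the $K$ in the statement. Set $\widetilde{R}=R+|f_0|$, exactly as in the proof of Lemma \ref{lemreg}, and let $C_{\widetilde{R},K}>0$ and $L_{\widetilde{R},K}>0$ be the constants provided by $A1'$ and $A2'$ for the ball $B_{\widetilde{R}}$ and the compact set $K$. For each fixed $v\in K$ the function $F_v:=F(\cdot,\cdot;v):[0,T]\times X\to X$ then satisfies hypotheses $A1$ and $A2$ of Theorem \ref{ex} with $C_{\widetilde{R}}=C_{\widetilde{R},K}$ and $L_{\widetilde{R}}=L_{\widetilde{R},K}$, the whole point being that these two constants do not depend on $v$.

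Next I would re-run the construction of Section \ref{Sec3} with $F$ replaced by $F_v$, tracking the fact that the parameter enters only through the above two constants. For $v\in K$ consider the Picard operator $A_v f(t)=f_0+\cI_t^\Phi F(t,f(t);v)$ on $C(J',B_R(f_0))$. By Lemma \ref{lemreg} applied to $F_v$ its range lies in $C^\beta(J',X)$; by Lemma \ref{lemT1}, $A_v$ maps $C(J',B_R(f_0))$ into itself as soon as $C_{\widetilde{R},K}U(T')<R$, a condition on $T'$ that does not involve $v$, so one choice of $T'\in(0,T]$ serves all $v\in K$ simultaneously. Inspecting the contraction estimate of Section \ref{Sec3}, the resulting bound for $\Norm{A_v f-A_v g}{\tau}$ in terms of $\Norm{f-g}{\tau}$ has a constant built only from $C$, $L_{\widetilde{R},K}$, $\beta$, a fixed $p\in(1,1/(1-\beta))$, $T'$ and $\tau$; hence it is the same for every $v\in K$, and a single $\tau>0$ makes all the $A_v$ contractions on $(C(J',B_R(f_0)),\Norm{\cdot}{\tau})$ at once.

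By the contraction theorem, for each $v\in K$ the operator $A_v$ then has a unique fixed point $f\in C(J',B_R(f_0))$, which by the Lemma recasting \eqref{CP} as \eqref{IE} (with $F$ replaced by $F_v$) is the unique solution of \eqref{Cprobpar} on $J'=[0,T']$. I do not expect a genuine obstacle here: the only point requiring care is that the thresholds appearing in Lemma \ref{lemT1} and in the contraction step depend on $F$ exclusively through the bounds $|F(s,f(s);v)|\le C_{\widetilde{R},K}$ and $|F(s,f(s);v)-F(s,g(s);v)|\le L_{\widetilde{R},K}|f(s)-g(s)|$, which is immediate from the proofs in Section \ref{Sec3}, and local compactness of $V$ is used only to guarantee that a compact $K$ with the stated property exists.
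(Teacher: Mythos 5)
Your proposal is correct and follows essentially the same route as the paper: use local compactness to fix a compact neighbourhood $K$ of $v_0$, observe that $A1'$ and $A2'$ give constants for $B_{\widetilde{R}}$ that are uniform over $v\in K$, so each $F(\cdot,\cdot;v)$ satisfies $A1$ and $A2$ with $v$-independent constants and the threshold $C_{\widetilde{R},K}U(T')<R$ from Lemma \ref{lemT1} yields a single $T'$ valid for all $v\in K$. The paper states this more tersely, while you additionally verify that the contraction constant and the choice of $\tau$ are also uniform in $v$ — a harmless (indeed welcome) extra check, not a different argument.
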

\begin{proof}
	Being $V$ a locally compact metric space, there exists a compact neighbourhood $K$ of $v_0$. Hence, from hypotheses $A1'$ and $A2'$, we have that for fixed $v \in K$, $F(\cdot,\cdot;v)$ satisfies hypotheses $A1$ and $A2$ of Theorem \ref{ex} with constants that are independent of the choice of $v \in K$. Hence, for fixed $R>0$, the choice of $T'$ is independent of $v \in K$.
\end{proof}
Now let us fix $v_0 \in V$, consider $K$ and $T'$ as in Proposition \ref{prop:simex2} and define the function $\Psi:v \in K \mapsto \Psi(\cdot;v) \in C(J',X)$ such that, for any fixed $v \in K$, $\Psi(\cdot;v)$ is solution of \eqref{Cprobpar}. We want to show that $\Psi$ is continuous in $v_0$ under some additional hypotheses on $F$.
\begin{prop}\label{prop:condep}
	Let $\Phi \in \SBF$ satisfy Assumption \ref{assp} and $F:[0,T]\times X \times V \to X$ satisfy $A1'$, $A2'$ and
	\begin{itemize}
		\item[$A3'$] For any compact set $K \subseteq V$ and any ball $B_R$ in $X$ there exists a constant $\widetilde{L}_{R,K}>0$ such that $|F(t,x;v_1)-F(t,x;v_2)|\le \widetilde{L}_{R,K}d(v_1,v_2)$ for almost any $t \in J$, any $x \in B_R$ and any $v_1,v_2 \in K$.
	\end{itemize}
	Fix $f_0 \in X$, $v_0 \in V$ and $R>0$. Consider a compact neighbourhood $K$ of $v_0$ and $T'>0$ as in Proposition \ref{prop:simex2}. Define $\Psi$ as stated before. Then $\Psi$ is continuous in $v_0$.
\end{prop}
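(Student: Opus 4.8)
The plan is to reproduce, with the obvious modifications, the argument used above for continuous dependence on the initial data: express the difference of two solutions as a generalized fractional integral and then invoke the third part of Theorem \ref{thm:Gron}.

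First I would fix $v_0$, the compact neighbourhood $K$ of $v_0$ and the common time horizon $T'>0$ provided by Proposition \ref{prop:simex2}, so that for every $v\in K$ there is a unique solution $\Psi(\cdot;v)\in C(J',B_R(f_0))$ of \eqref{Cprobpar} with $J'=[0,T']$. Setting $\widetilde R=R+|f_0|$ we have $\Psi(t;v)\in B_{\widetilde R}$ for all $t\in J'$ and all $v\in K$. Define $h(t;v)=|\Psi(t;v)-\Psi(t;v_0)|$, which is continuous in $t$, hence belongs to $L^1(0,T')$ and is non-negative. Using the integral form $\Psi(t;v)=f_0+\cI_t^\Phi F(t,\Psi(t;v);v)$ (legitimate since $F(\cdot,\Psi(\cdot;v);v)$ is bounded by $A1'$, hence in $L^1$), subtracting the equations for $v$ and $v_0$, and inserting $\pm F(s,\Psi(s;v_0);v)$ inside the integral, I would split the estimate into two pieces: the first, bounded by $A2'$ by $L_{\widetilde R,K}\int_0^t u_\Phi(t-s)h(s;v)\,ds$, and the second, bounded by $A3'$ by $\widetilde L_{\widetilde R,K}\,d(v,v_0)\int_0^t u_\Phi(t-s)\,ds=\widetilde L_{\widetilde R,K}\,d(v,v_0)\,U_\Phi(t)\le \widetilde L_{\widetilde R,K}\,d(v,v_0)\,U_\Phi(T')$.

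Combining the two pieces gives the integral inequality
\begin{equation*}
h(t;v)\le \widetilde L_{\widetilde R,K}\,d(v,v_0)\,U_\Phi(T')+L_{\widetilde R,K}\int_0^t u_\Phi(t-s)h(s;v)\,ds = a + g(t)\,\cI_t^\Phi h(t;v),
\end{equation*}
with $a:=\widetilde L_{\widetilde R,K}\,d(v,v_0)\,U_\Phi(T')$ a non-negative constant (in particular non-decreasing) and $g\equiv L_{\widetilde R,K}$ a non-negative non-decreasing constant. Applying the third part of Theorem \ref{thm:Gron} then yields
\begin{equation*}
h(t;v)\le \widetilde L_{\widetilde R,K}\,U_\Phi(T')\,\fe_\Phi(T';L_{\widetilde R,K})\,d(v,v_0), \qquad t\in J',
\end{equation*}
and taking the maximum over $t\in J'$ shows that $v\mapsto\Psi(\cdot;v)$ is Lipschitz on $K$ for the sup-norm on $C(J',X)$; in particular it is continuous at $v_0$, which is the assertion.

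I do not expect a serious obstacle here. The only point that requires care is the bookkeeping of the constants: one must make sure that the Lipschitz and boundedness constants, as well as the time horizon, can be chosen uniformly over $v$ in a fixed compact neighbourhood $K$ of $v_0$ — this is precisely what hypotheses $A1'$, $A2'$, $A3'$ and the uniform existence statement of Proposition \ref{prop:simex2} guarantee — and that the add-and-subtract splitting produces terms each of which fits the hypotheses of Theorem \ref{thm:Gron} (a non-decreasing forcing term $a$ and a non-decreasing coefficient $g$).
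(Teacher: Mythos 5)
Your proposal is correct and follows essentially the same route as the paper: add and subtract a mixed term inside the integral, bound the two pieces via $A2'$ and $A3'$ to obtain an inequality of the form $h(t;v)\le a(t)+g(t)\cI_t^\Phi h(t;v)$, and conclude with the third part of Theorem \ref{thm:Gron}. The only (immaterial) differences are that you insert $\pm F(s,\Psi(s;v_0);v)$ rather than $\pm F(s,\Psi(s;v);v_0)$ and replace $U_\Phi(t)$ by the constant $U_\Phi(T')$ before applying the Gr\"onwall inequality, whereas the paper keeps the non-decreasing factor $U_\Phi(t)$; both choices satisfy the hypotheses of the theorem.
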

\begin{proof}
	Let us define $h(t;v)=|\Psi(t;v)-\Psi(t;v_0)|$. Being $\Psi(t;v)$ and $\Psi(t;v_0)$ solutions of \eqref{Cprobpar} we have
	\begin{align*}
	h(t;v)&\le\int_0^tu_\Phi(t-s)|F(s,\Psi(s;v);v)-F(s,\Psi(s;v_0);v_0)|ds\\
	&\le \int_0^tu_\Phi(t-s)|F(s,\Psi(s;v);v)-F(s,\Psi(s;v);v_0)|ds\\
	&+\int_0^tu_\Phi(t-s)|F(s,\Psi(s;v);v_0)-F(s,\Psi(s;v_0);v_0)|ds\\
	&\le \widetilde{L}_{R,K}d(v_1,v_2)U_\Phi(t)+L_{R,K}\int_0^tu_\Phi(t-s)h(s;v)ds.
	\end{align*}
	Hence, by Theorem \ref{thm:Gron}, we have
	\begin{equation*}
	h(t;v)\le \widetilde{L}_{R,K}d(v_1,v_2)U_\Phi(t) \fe_\Phi(t;L_{R,K})\le \widetilde{L}_{R,K}d(v_1,v_2)U_\Phi(T) \fe_\Phi(T;L_{R,K}),
	\end{equation*} 
	concluding the proof.
\end{proof}
\begin{rmk}
	As in the case of the initial datum, $\Psi$ is actually Lipschitz-continuous in $K$.
\end{rmk}
\subsection{Global uniqueness}
In Corollary \ref{corollaryexist} we have shown that if a non-homogeneous linear problem admits a global solution, then such solution is continuous. Now that we have a generalized Gr\"onwall inequality, we can extend this result to the non-linear case.
\begin{prop}
	Let $\Phi\in \SBF$ satisfy Assumption \ref{assp} with $\nu_\Phi$ absolutely continuous, $f_0 \in X$ and $F:[0,+\infty) \times X \to X$ satisfying
	\begin{itemize}
		\item[$A2_{\rm loc}$] For any $T>0$ and any ball $B_R$ in $X$ there exists a constant $L_R>0$ such that $|F(t,x)-F(t,z)|\le L_R|x-z|$ for almost any $t \in [0,T]$ and any $x,z \in B_R$.
	\end{itemize}
	Then, if the Cauchy problem \eqref{Cprob} admits a solution in $C([0,+\infty),X)$, it is unique.
\end{prop}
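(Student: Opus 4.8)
The plan is to obtain the result as a one‑line corollary of the first (equivalently, the third) bullet of Theorem \ref{thm:Gron}. Suppose $f_1,f_2 \in C([0,+\infty),X)$ are both solutions of \eqref{Cprob} with the same initial datum $f_0$. Fix an arbitrary $T>0$; since $T$ is arbitrary it suffices to prove $f_1(t)=f_2(t)$ for every $t \in [0,T]$. By the Lemma that recasts \eqref{CP} as \eqref{IE} (applicable because $\Phi \in \SBF$ and, being solutions, $F(\cdot,f_i(\cdot)) \in L^1([0,T],X)$), we have $f_i(t)=f_0+\cI_t^\Phi F(t,f_i(t))$ on $[0,T]$ for $i=1,2$. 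As $f_1,f_2$ are continuous on the compact interval $[0,T]$, they are bounded there, so we may pick $R>0$ with $f_1(t),f_2(t)\in B_R$ for all $t\in[0,T]$; let $L_R$ be the corresponding Lipschitz constant furnished by $A2_{\rm loc}$ on $[0,T]$ and $B_R$.

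Next I would set $x(t)=|f_1(t)-f_2(t)|$, which is continuous, hence in $L^1(0,T)$, and estimate, using linearity of $\cI_t^\Phi$, non‑negativity of $u_\Phi$, and $A2_{\rm loc}$:
\begin{equation*}
x(t)\le \int_0^t u_\Phi(t-s)\,\bigl|F(s,f_1(s))-F(s,f_2(s))\bigr|\,ds \le L_R\int_0^t u_\Phi(t-s)\,x(s)\,ds = L_R\,\cI_t^\Phi x(t).
\end{equation*}
This is precisely \eqref{intineq} with $a\equiv 0$ and $g\equiv L_R$, a non‑negative, non‑decreasing (constant), integrable function. Applying the first bullet of Theorem \ref{thm:Gron} yields $x(t)\le \sum_{k=0}^{+\infty}B^k 0=0$; alternatively the third bullet applies since $a\equiv 0$ is trivially non‑decreasing, giving $x(t)\le 0\cdot\fe_\Phi(t,L_R)=0$. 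Hence $f_1\equiv f_2$ on $[0,T]$, and letting $T\to+\infty$ completes the proof.

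I do not anticipate a real obstacle: the statement is essentially a direct consequence of Theorem \ref{thm:Gron}. The only points that need a little care are (i) verifying the hypotheses of that theorem, in particular that the coefficient $g$ may be taken to be the single constant $L_R$ — which is exactly what the uniform‑on‑$[0,T]$ form of $A2_{\rm loc}$ guarantees — and (ii) justifying the passage to the integral equation \eqref{IE} together with the local boundedness of the two solutions, which follows from continuity and compactness of $[0,T]$. The absolute continuity of $\nu_\Phi$ assumed in the statement is not actually used by this Grönwall‑based argument (it entered only in the differentiation step of Corollary \ref{corollaryexist}); I would either retain it merely for parallelism with the linear case or drop it.
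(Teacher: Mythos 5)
Your proof is correct, but it takes a genuinely different and more direct route than the paper. The paper mimics the structure of Corollary \ref{corollaryexist}: it argues by contradiction with $T_*=\sup\{t>0:f_1(t)=f_2(t)\}$, splits the memory integral at $T_*$ (this is exactly where the absolute continuity of $\nu_\Phi$ is used, to differentiate $\overline{\nu}_\Phi(t-s)$ under the integral over $[0,T_*]$), shifts time so that the difference solves a Cauchy problem with zero initial datum started at $T_*$, and only then applies Theorem \ref{thm:Gron} on a short interval past $T_*$ to reach a contradiction. You instead fix an arbitrary $T>0$, pass to the integral formulation, and apply the Gr\"onwall inequality globally on $[0,T]$ with $a\equiv 0$ and $g\equiv L_R$. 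Your route is shorter, avoids the $T_*$ machinery altogether, and — as you correctly observe — makes no use of the absolute continuity of $\nu_\Phi$, so it actually proves a slightly stronger statement. What the paper's approach buys is that it never needs to apply $\cI_t^\Phi$ to $F(\cdot,f_i(\cdot))$ for each solution separately: it only ever integrates the difference $F(\cdot,f_1(\cdot))-F(\cdot,f_2(\cdot))$, which is dominated by $L_R|f_1-f_2|\in L^1$ via $A2_{\rm loc}$.

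That last point is the one place where your write-up should be tightened. The recasting Lemma carries the proviso $F(\cdot,f_i(\cdot))\in L^1([0,T],X)$, and this does \emph{not} follow from $A2_{\rm loc}$ alone (no $A1$-type bound is assumed here, so $|F(\cdot,f_0)|$ need not be integrable); asserting it holds ``being solutions'' leans on an implicit regularity convention. The clean fix, which keeps your argument intact, is to subtract the two equations first: $\partial_t^\Phi$ is linear, $(f_1-f_2)(0)=0$, so by \eqref{invid} applied to $f_1-f_2$ one gets
\begin{equation*}
f_1(t)-f_2(t)=\cI_t^\Phi\bigl(F(\cdot,f_1(\cdot))-F(\cdot,f_2(\cdot))\bigr)(t),
\end{equation*}
whose integrand is bounded in norm by $L_R\,x(\cdot)$ with $x$ continuous. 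From there your estimate $x(t)\le L_R\,\cI_t^\Phi x(t)$ and the application of Theorem \ref{thm:Gron} with $a\equiv 0$ go through verbatim.
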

\begin{proof}
	Arguing as in Corollary \ref{corollaryexist}, let us suppose there are two solutions $f_1(t)$ and $f_2(t)$ of \eqref{Cprob} defined on $[0,+\infty)$. By Theorem \ref{ex} we already know there exists $T'>0$ such that $f_1(t)=f_2(t)$ for any $t \in [0,T']$. Let us then consider $T_*=\sup\{t>0: f_1(t)=f_2(t)\}$. We want to show that $T_*=+\infty$. Let us argue by contradiction: suppose that $T_*<+\infty$. Arguing as in Corollary \ref{corollaryexist}, we have
	\begin{align*}
	\der{}{t}\int_{T_*}^{t}&\bar{\nu}_\Phi(t-s)(f_1(s)-f_*)ds=F(t,f_1(t))\\
	&+\int_0^{T_*} \nu_\Phi(t-s)(f_1(s)-f_0)ds -(f_*-f_0)\overline{\nu}_\Phi(t-T_*).
	\end{align*}
	The same relation holds for $f_2(t)$. Define $g(t)=f_1(t)+f_2(t)$. Observing that $g(t)\equiv 0$ for any $t \in [0,T_*]$, we have
	\begin{align*}
	\der{}{t}\int_{T_*}^{t}&\bar{\nu}_\Phi(t-s)g(s)ds=F(t,f_1(t))-F(t,f_2(t))
	\end{align*}
	and then, setting $w=s-T_*$,
	\begin{align*}
	\der{}{t}\int_{0}^{t-T_*}&\bar{\nu}_\Phi(t-T_*-w)g(T_*+w)dw=F(t,f_1(t))-F(t,f_2(t)).
	\end{align*}
	Defining $\widetilde{g}(t)=g(T_*+t)$, $\widetilde{f}_i(t)=f_i(T_*+t)$, $\widetilde{F}(t,x)=F(T_*+t,x)$ and applying $\cI_t^\Phi$ on both sides we have
	\begin{align*}
	\widetilde{g}(t-T_*)=\cI_t^\Phi(\widetilde{F}(\cdot,\widetilde{f}_1(\cdot))-\widetilde{F}(\cdot,\widetilde{f}_2(\cdot)))(t-T_*).
	\end{align*}
	Now define $h(t)=|\widetilde{g}(t)|$ and substitute $t$ to $t-T_*$. Then we have
	\begin{align*}
	h(t)\le \cI_t^\Phi(|\widetilde{F}(\cdot,\widetilde{f}_1(\cdot))-\widetilde{F}(\cdot,\widetilde{f}_2(\cdot))|)(t).
	\end{align*}
	Fix $T'>0$. Being $f_1,f_2 \in C([0,+\infty),X)$, there exists a ball $B_R$ such that $f_1(t),f_2(t) \in B_R$ for any $t \in [0,T_*+T']$. By hypothesis $A2_{\rm loc}$ we obtain
	\begin{equation*}
	|\widetilde{F}(t,\widetilde{f}_1(t))-\widetilde{F}(t,\widetilde{f}_2(t))|\le L_R h(t)
	\end{equation*}
	for $t \in [0,T']$	and then we have
	\begin{align*}
	h(t)\le L_R \cI_t^\Phi h(t) \ t \in [0,T'].
	\end{align*}
	By Theorem \ref{thm:Gron} we have $h(t)\equiv 0$ for $t \in [0,T']$. However, this implies $f_1(t)=f_2(t)$ for $t \in [T_*,T_*+T']$, which is absurd. Hence $T_*=+\infty$.
\end{proof}

\bibliographystyle{abbrv}
\bibliography{bib}
\end{document}